\documentclass{amsart}
\usepackage{amsmath}
\usepackage{amssymb}
\usepackage{amscd}
\usepackage[all]{xy}
\usepackage{color}
\usepackage{url}

\newtheorem{theorem}{Theorem}
\newtheorem{lemma}[theorem]{Lemma}
\newtheorem{corollary}[theorem]{Corollary}
\newtheorem{proposition}[theorem]{Proposition}
\newtheorem{definition}[theorem]{Definition}

\newtheorem{remark}[theorem]{Remark}


\newcommand{\Z}{\mathbb{Z}}
\newcommand{\N}{\mathbb{N}}

\DeclareMathOperator{\Pic}{Pic}

\begin{document}

\title[ ]
{Semigroup rings as weakly Krull domains}
\author[G.W. Chang]{Gyu Whan Chang}
\author[V. Fadinger]{Victor Fadinger}
\author[D. Windisch]{Daniel Windisch}

\address{Department of Mathematics Education, Incheon Nation University, Incheon 22012, South Korea}
\email{whan@inu.ac.kr}
\address{University of Graz, NAWI Graz \\
Institute for Mathematics and Scientific Computing \\
Heinrichstra{\ss}e 36\\
8010 Graz, Austria}
\email{victor.fadinger@uni-graz.at}
\address{Graz University of Technology, NAWI Graz\\
Institute for Analysis and Number Theory\\
Kopernikusgasse 24/II\\
8010 Graz, Austria}
\email{dwindisch@math.tugraz.at}

\thanks{$2010$ Mathematics Subject Classification : 13A15, 13F05, 20M12}
\thanks{Words and phrases : Semigroup ring, weakly Krull domain, finite $t$-character,  system of sets of lengths}

\date{\today}

\begin{abstract}
Let $D$ be an integral domain and $\Gamma$ be a torsion-free commutative cancellative (additive) semigroup with identity element and
quotient group $G$. In this paper, we show that if char$(D)=0$ (resp., char$(D)=p>0$), then
$D[\Gamma]$ is a weakly Krull domain if and only if $D$ is a weakly Krull UMT-domain,
$\Gamma$ is a weakly Krull UMT-monoid, and $G$ is of type $(0,0,0, \dots )$ (resp., type $(0,0,0, \dots )$ except $p$).
Moreover, we give arithmetical applications of this result.
\end{abstract}
\maketitle

\section{Introduction}

Let $D$ be an integral domain and $X^1(D)$ be the set of height-one
prime ideals of $D$. We say that $D$ is a Krull domain if $D$ satisfies the following three properties:

\vspace{.2cm}

(i) $D = \bigcap_{P \in X^1(D)}D_P$,

(ii) each nonzero nonunit of $D$ is contained in only finitely many height-one prime ideals
of $D$, and

(iii) $D_P$ is a principal ideal domain (PID) for all $P \in X^1(D)$.

\vspace{.2cm}
\noindent
Krull domains include UFDs and Dedekind domains. However, many well-studied rings are close to being Krull by satisfying (i) and (ii),
but property (iii) fails, e.g. non-principal orders in number fields and $\mathbb{Q}[X^2, X^3]$ for an indeterminate $X$
over the field $\mathbb{Q}$ of rational numbers. An integral domain satisfying (i) and (ii) is called a weakly Krull domain.
Hence, Krull domains and one-dimensional noetherian domains are weakly Krull, but the backwards implications need not hold true.
The notion of weakly Krull domains was first introduced and
studied by Anderson, Mott and Zafrullah \cite{amz92}.
A weakly factorial domain (WFD) is an integral domain whose nonzero elements
can be written as finite products of primary elements. Then
UFDs are WFDs, and $D$ is a WFD if and only if $D$ is a weakly Krull domain
and each $t$-invertible $t$-ideal of $D$ is principal \cite[Theorem]{az90}.

Let $\Gamma$ be a monoid, i.e., a commutative cancellative (additive) semigroup with identity element
and $D[\Gamma]$ be the semigroup ring of $\Gamma$ over $D$.
Then $\Gamma$ has a quotient group \cite[Theorem 1.2]{g84}, and $D[\Gamma]$
 is an integral domain if and only if $\Gamma$ is torsion-free \cite[Theorem 8.1]{g84}.
It is well known that $D[\Gamma]$ is a Krull domain (resp., UFD)
if and only if $D$ is a Krull domain (resp., UFD),
$\Gamma$ is a Krull monoid (resp., factorial monoid), and $\langle \Gamma \rangle$,
the quotient group of $\Gamma$, satisfies the ascending chain condition
on its cyclic subgroups \cite[Theorem 15.6]{g84} (resp., \cite[Theorem 14.16]{g84}).
In \cite{c09}, Chang characterized when $D[\Gamma]$ is a WFD under the
assumption that $\langle \Gamma \rangle$ satisfies the ascending chain condition
on its cyclic subgroups. Then, he asked when $D[\Gamma]$ becomes a weakly Krull domain \cite[Question 14]{c09}.
Furthermore, in \cite{co19}, Chang and Oh completely characterized the weakly factorial property of $D[\Gamma]$.
Recently, in \cite{fw21}, Fadinger and Windisch gave a partial answer to Chang's question using the concept of weakly Krull monoids.
These are defined analogously  to weakly Krull domains by properties (i) and (ii) above,
which were introduced and
characterized by Halter-Koch in \cite{hk95}. There he
proved that $D$ is weakly Krull if and only if its multiplicative monoid $D\setminus \{0\}$ is a weakly Krull monoid.

Now, in this paper, we give a complete characterization of weaky Krull
semigroup rings $D[\Gamma]$. Precisely, we show that
if char$(D)=0$ (resp., char$(D)=p>0$), then
$D[\Gamma]$ is a weakly Krull domain if and only if $D$ is a weakly Krull UMT-domain,
$\Gamma$ is a weakly Krull UMT-monoid, and the quotient group
of $\Gamma$ is of type $(0,0,0, \dots )$ (resp., type $(0,0,0, \dots )$ except $p$).
As a corollary, we recover Matsuda's results \cite{m77, m82} that if char$(D)=0$ (resp., char$(D) = p >0$),
then $D[\Gamma]$ is a generalized Krull domain if and only if $D$ is a generalized Krull domain,
$\Gamma$ is a generalized Krull monoid, and $\langle \Gamma \rangle$ is of type $(0,0,0, \dots )$
(resp., type $(0,0,0, \dots )$ except $p$).

Moreover, in the final section we use the main result in order to obtain arithmetical statements on weakly Krull semigroup rings.
For instance, we provide a large class of weakly Krull numerical semigroup rings that have full systems of sets of lengths.
Also for a certain class of affine semigroup rings we prove a result about the connection of its class group and its system of sets of lengths.
Thereby, we are the first to give a fairly broad but sufficiently concrete class of non-local weakly Krull domains
that are not Krull, but whose arithmetic is still accessible.

\section{Definitions related to the $t$-operation and monoids}

Let $D$ be an integral domain with quotient field $K$, $\Gamma$ be a torsion-free
monoid with
quotient group $\langle \Gamma \rangle$; so $D[\Gamma]$ is an integral domain,
$\bar{D}$ be the integral closure of $D$ in $K$,
and $\bar{\Gamma}$ be the integral closure (i.e., root closure) of $\Gamma$ in $\langle \Gamma \rangle$.
 If we say that $D$ is local, we do not impose that $D$ is noetherian.

\subsection{The $t$-operation on integral domains}
Let $F(D)$ be the set of nonzero fractional ideals of $D$. For $I \in F(D)$,
let $I^{-1} = \{ x \in K \mid xI \subseteq D\}$. It is easy to see that $I^{-1} \in F(D)$.
Hence, the $v$- and $t$-operations are well-defined as follows:
\begin{enumerate}
\item $I_v = (I^{-1})^{-1}$ and
\item $I_t = \bigcup \{ J_v \mid J \text{ is a finitely generated subideal of } I\}$.
\end{enumerate}
Let $* = v$ or $*=t$. Then, for any nonzero $a \in K$ and $I, J \in F(D)$,
(i) $aI_* = (aI)_*$,
(ii) $I \subseteq I_*$; $I \subseteq J$ implies $I_* \subseteq J_*$,
(iii) $(I_*)_* = I_*$, and (iv) $(IJ)_* = (IJ_*)_*$.
An $I \in F(D)$ is called a {\it $*$-ideal} if $I_* = I$.

A $t$-ideal is a
{\it maximal $t$-ideal} of $D$ if it is maximal among proper integral $t$-ideals of $D$.
It is easy to see that each maximal $t$-ideal is a prime ideal, each $t$-ideal is contained in a maximal $t$-ideal,
a prime ideal minimal over a $t$-ideal is a $t$-ideal, each nonzero principal ideal is a $v$-ideal,
each $v$-ideal is a $t$-ideal, $I \subseteq I_t \subseteq I_v$ for all $I \in F(D)$, and $I_t = I_v$ if $I$ is finitely generated.
Let Max$(D)$ (resp., $t$-Max$(D)$) be the set of maximal ideals (resp., maximal $t$-ideals) of $D$.
It is easy to see that $D = \bigcap_{M \in \text{Max}(D)}D_M = \bigcap_{P \in t\text{-Max}(D)}D_P$.
By $t$-dim$(D)=1$, we mean that $D$ is not a field and each prime $t$-ideal of $D$ is a maximal $t$-ideal,
and in this case, $X^1(D) = t$-Max$(D)$.
It is well known that if $D$ is not a field, then
$D$ is a weakly Krull domain if and only if $t$-dim$(D)=1$ and
$D$ is of finite $t$-character (i.e., each nonzero nonunit of $D$
is contained in only finitely many maximal $t$-ideals) \cite[Lemma 2.1]{amz92}.

An $I \in F(D)$ is said to be invertible (resp., $t$-invertible, $v$-invertible) if $II^{-1} =D$ (resp., $(II^{-1})_t = D$, $(II^{-1})_v = D$).
Let $T(D)$ be the abelian group of $t$-invertible fractional $t$-ideals of $D$
under the $t$-multiplication $I*J = (IJ)_t$
and $Prin(D)$ be the set of nonzero principal fractional ideals of $D$.
Then $Prin(D)$ is a subgroup of $T(D)$, and
$Cl_t(D) = T(D)/Prin(D)$ denotes the factor group of $T(D)$ modulo $Prin(D)$.
We denote the group of all $v$-invertible fractional $v$-ideals by $F_v(D)^{\times}$ and the monoid of all $v$-invertible integral $v$-ideals
by $\mathcal I_v^*(D)$ where multiplication is defined via $I\cdot_v J=(I\cdot J)_v$.
Then $F_v(D)^\times$ is the quotient group of $\mathcal I_v^*(D)$. By $\mathcal C_v(D)$ we denote the quotient of $F_v(D)^\times$ modulo
$Prin(D)$ and call it the ($v$-)\textit{class group} of $D$. If we denote the set of all non-zero (integral) principal ideals of $D$ by $\mathcal H(D)$,
then the embedding $\mathcal H(D)\to \mathcal I_v^*(D)$ is a cofinal divisor homomorphism and $\mathcal I_v^*(D)/\mathcal H(D)=\mathcal C_v(D)$
(for more in this direction see \cite[Chapter 2.10]{GHK}).
It is easy to see that  a $t$-invertible $t$-ideal is a $v$-invertible $v$-ideal.
Hence, $Prin(D) \subseteq T(D) \subseteq F_v(D)^{\times}$, and thus $Cl_t(D)$ is a subgroup of $\mathcal C_v(D)$,
and equality holds if $D$ is a Mori domain (e.g., Krull domain). An integral
domain is a Mori domain if it satisfies the ascending chain condition on its integral $v$-ideals.

\subsection{Monoids.}
Let $H$ be a monoid with quotient group $\langle H\rangle$.
As in the case of integral domains, we can define the
$v$-operation, the $t$-operation, $t$-Max$(H)$, $t$-invertibility, the class groups, and the Mori monoid.
The reader is referred to \cite{h98} for more on the $v$- and $t$-operation on monoids.
A monoid $H$ is a Krull monoid if and only if $H$ is a completely integrally closed Mori monoid,
if and only if each ideal of $H$ is $t$-invertible \cite[Theorem 22.8]{h98}.

Let $G$ be a torsion-free abelian group.
We say that $G$ is {\em of type} $(0,0,0, \ldots)$
if $G$ satisfies the ascending chain condition on its cyclic subgroups (equivalently, for each nonzero element $g \in G$,
there exists a largest positive integer $n_g$ such that $n_g x = g$ is solvable in $G$) \cite[Theorem 14.10]{g84}.
For a prime number $p$, $G$ is said to be of type $(0,0,0, \ldots)$ except $p$
if $G$ satisfies the following two conditions; for each nonzero element $g \in G$,  (i)
an infinite number of prime numbers do not divide $g$
and (ii) for each prime number $q \neq p$, $q^n$ does not divide $g$ for some positive integer $n$.
Clearly, a torsion-free abelian group of  type $(0,0,0, \ldots)$ is of type $(0,0,0, \ldots)$ except $p$ for
all prime numbers $p$, but not vice versa. For example, let $G = \bigcup_{n=1}^{\infty} (1/p^n ) \mathbb{Z}$ for a prime number $p$,
then $G$ is of type $(0,0,0, \ldots)$ except $p$ but not of type $(0,0,0, \ldots)$.
The notion of  type $(0,0,0, \ldots)$ except $p$ was introduced by Matsuda \cite{m77, m82}
in order to study when $D[G]$ is a generalized Krull domain for an integral domain $D$ with char$(D) = p$.

\begin{remark}
{\em (1) Let $G$ be a nonzero torsion-free abelian group and $p >0$ be a prime number.
In \cite{co19}, Chang and Oh say that $G$ is of type $(0,0,0, \ldots)$ except $p$
if $G$ satisfies the following two conditions for each nonzero element $g \in G$;

\vspace{.1cm}

(i$'$) the number of prime numbers dividing $g$ is finite and

(ii) for each prime number $q \neq p$, $q^n$ does not divide $g$ for some integer $n \geq 1$.

\vspace{.1cm}
\noindent
Then, in \cite[Theorem 4.2]{co19}, they prove that if $D$ is an integral domain with char$(D) = p >0$,
then $D[G]$ is of finite $t$-character if and only if
$D$ is of finite $t$-character and $G$ is of type $(0,0,0, \ldots)$ except $p$.
But, in order to prove \cite[Theorem 4.2]{co19}, they actually used Matsuda's original definition
of type $(0,0,0, \ldots)$ except $p$.  Thus, there is no problem when we cite the results of \cite{co19}.

\vspace{.1cm}
(2) Let $S$ be an infinite set of prime numbers such that there are also infinitely many prime numbers in $\mathbb{Z} \setminus S$
and $p \not\in S$.
Let $m$ be a positive integer and $$G = \{\frac{a}{p_1^{e_1} \cdots p_k^{e_k}p^n} \mid a \in \mathbb{Z},
p_i \in S, 0 \leq e_i \leq m \text{ for } i =1, \dots , k, \text{ and } n \geq 1\}.$$
Then $G$ is a torsion-free abelian group under the usual addition.
Moreover, $G$ is of type $(0,0,0, \ldots)$ except $p$, but $G$ does not satisfy (i$'$)
(for example, $1 \in G$ and each $q \in S$ divides $1$
because $1 = q \cdot \frac{1}{q}$ and $\frac{1}{q} \in G$). Thus, (i$'$) together with (ii)
is stronger than (i) and (ii).}
\end{remark}

\subsection{Semigroup rings}
Let $\Gamma$ be a torsion-free monoid.
It is well known that $\Gamma$
admits a total order $<$ compatible with its monoid operation \cite[Corollary 3.4]{g84}.
Hence each $f \in D[\Gamma]$ is uniquely expressible in the form
$f = a_1X^{\alpha_1} + a_2X^{\alpha_2} + \cdots + a_kX^{\alpha_k}$,
where $a_i \in D$ and $\alpha_j \in \Gamma$ with $\alpha_1 < \alpha_2 < \cdots < \alpha_k$.
For an ideal $I$ (resp., $J$) of $D$ (resp., $\Gamma$),
let $I[J] = \{a_1X^{\alpha_1} + a_2X^{\alpha_2} + \cdots + a_kX^{\alpha_k} \in D[\Gamma] \mid a_i \in I$ and $\alpha_j \in J\}$.
Then $I[J]$ is an ideal of $D[\Gamma]$ \cite[Lemma 2.3]{eik02},
and $I[J]$ is a prime ideal if and only if either
$I$ is a prime ideal of $D$ and $J = \Gamma$ or $I = D$ and $J$ is a prime ideal of $\Gamma$ (cf. \cite[Corollary 8.2]{g84}
and the proof of \cite[Lemma 3.1]{fw21}).

\subsection{UMT-domains and UMT-monoids}
Let $X$ be an indeterminate over $D$ and $D[X]$ be the polynomial ring over $D$.
A nonzero prime ideal $Q$ of $D[X]$ is called an upper to zero in $D[X]$
if $Q \cap D = (0)$. So $Q$ is an upper to zero in $D[X]$ if and only if
$Q = fK[X] \cap D[X]$ for some irreducible polynomial $f \in K[X]$.
Following \cite{hz89}, we say that $D$ is a UMT-domain if each upper to zero in $D[X]$ is
a maximal $t$-ideal of $D[X]$. It is known that $D$ is a UMT-domain
if and only if $\bar{D}_P$ is a Pr\"ufer domain for all $P \in t$-Max$(D)$ \cite[Theorem 1.5]{fgh98}.

In \cite[Theorem 17]{cs18}, it was shown that $D[\Gamma]$ is a UMT-domain if and only if
$D$ is a UMT-domain and $\bar{\Gamma}_S$ is a valuation monoid for all maximal $t$-ideals
$S$ of $\Gamma$. Hence, the
following is a natural generalization of UMT-domains to monoids.

\begin{definition}
{\em Let $\Gamma$ be a torsion-free monoid with quotient group $G$
and $\bar{\Gamma}$ be the integral closure (i.e., root closure) of $\Gamma$ in $G$.
We say that $\Gamma$ is a {\em UMT-monoid} if $\bar{\Gamma}_S$ is a valuation monoid for all maximal $t$-ideals
$S$ of $\Gamma$.}
\end{definition}

A Pr\"ufer $v$-multiplication domain (P$v$MD) is an integral domain
whose nonzero finitely generated ideals are $t$-invertible.
Then $D$ is a P$v$MD if and only if $D_P$ is a valuation domain for
all maximal $t$-ideals $P$ of $D$ \cite[Theorem 5]{g67},
if and only if $D$ is an integrally closed UMT-domain \cite[Proposition 3.2]{hz89}.
Now, a monoid $\Gamma$ is called a Pr\"ufer $v$-multiplication monoid (P$v$MM)
if every finitely generated ideal of $\Gamma$ is $t$-invertible.
It is known that $\Gamma$ is a P$v$MM if and only if $\Gamma_S$ is a valuation monoid for all
$S \in t$-Max$(\Gamma)$ \cite[Theorem 17.2]{h98}, hence we have

\begin{proposition} \label{prop6}
Let $\Gamma$ be an integrally closed torsion-free monoid.
Then $\Gamma$ is a UMT-monoid if and only if $\Gamma$ is a P$v$MM.
\end{proposition}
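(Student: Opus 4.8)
The plan is to reduce the statement to the valuation-theoretic characterizations that have just been recalled. Recall that for an arbitrary integral domain, being a P$v$MD is equivalent to being an integrally closed UMT-domain (\cite[Proposition 3.2]{hz89}), and that a P$v$MD is characterized by the condition that $D_P$ is a valuation domain for every $P \in t\text{-Max}(D)$ (\cite[Theorem 5]{g67}). The monoid analogue of the latter is \cite[Theorem 17.2]{h98}: $\Gamma$ is a P$v$MM if and only if $\Gamma_S$ is a valuation monoid for all $S \in t\text{-Max}(\Gamma)$. So the real content of Proposition~\ref{prop6} is to see that, \emph{under the standing hypothesis that $\Gamma$ is integrally closed} (i.e.\ $\bar\Gamma = \Gamma$), the UMT-monoid condition --- $\bar\Gamma_S$ is a valuation monoid for all $S \in t\text{-Max}(\Gamma)$ --- is literally the same condition as the P$v$MM condition.

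First I would prove the implication that is essentially a tautology: if $\Gamma$ is integrally closed and a UMT-monoid, then by definition $\bar\Gamma_S = \Gamma_S$ is a valuation monoid for every $S \in t\text{-Max}(\Gamma)$, which by \cite[Theorem 17.2]{h98} says exactly that $\Gamma$ is a P$v$MM. Conversely, if $\Gamma$ is integrally closed and a P$v$MM, then again $\Gamma_S$ is a valuation monoid for all $S \in t\text{-Max}(\Gamma)$ by \cite[Theorem 17.2]{h98}, and since $\bar\Gamma = \Gamma$ this reads $\bar\Gamma_S$ is a valuation monoid for all $S \in t\text{-Max}(\Gamma)$, i.e.\ $\Gamma$ is a UMT-monoid. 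Both directions thus come down to substituting $\bar\Gamma = \Gamma$ into the relevant definitions and quoting \cite[Theorem 17.2]{h98}.

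The only point that genuinely requires a word of care --- and the step I would flag as the potential obstacle --- is the compatibility of localization with the $t$-operation and, relatedly, whether ``$\Gamma$ integrally closed'' transfers to the localizations $\Gamma_S$ so that the two occurrences of $t\text{-Max}$ (one for $\Gamma$ in the UMT-condition, one implicitly in \cite[Theorem 17.2]{h98}) refer to the same set of primes. Since $\bar\Gamma$ is the root closure of $\Gamma$ in $G = \langle\Gamma\rangle$ and localization commutes with root closure in the torsion-free setting, $\overline{\Gamma_S} = (\bar\Gamma)_S = \Gamma_S$, so each $\Gamma_S$ is itself integrally closed; this is what makes the definitional substitution legitimate. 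I would include a sentence making this explicit and otherwise keep the proof to a few lines, as it is a direct consequence of the results quoted immediately before the statement.
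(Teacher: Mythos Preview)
Your proof is correct and follows exactly the same route as the paper: the proposition is introduced with the words ``hence we have'' immediately after the citation of \cite[Theorem 17.2]{h98}, so the paper's argument is precisely to substitute $\bar\Gamma = \Gamma$ into the definition of UMT-monoid and compare with the valuation-monoid characterization of P$v$MMs. Your additional remark about localization commuting with root closure is harmless but not actually needed, since once $\bar\Gamma = \Gamma$ the symbol $\bar\Gamma_S$ already literally denotes $\Gamma_S$.
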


By Proposition \ref{prop6}, UMT-monoids include valuation monoids, P$v$MMs, Krull monoids,
and monoids of $t$-dimension one whose integral closure is a P$v$MM.

\section{Weakly Krull domains; Preliminary Results}

Weakly Krull domains are of finite $t$-character. The class of integral domains of finite $t$-character
includes Krull domains and Noetherian domains. In this section, we recall a couple of
results on weakly Krull domains some of which are already known.
The following proposition seems to be of this kind, but we could not find a proper reference, so we provide a full proof.

\begin{proposition} \label{prop1}
Let $D$ be an integral domain with quotient field $K$, and assume that $D \neq K$.
\begin{enumerate}
\item Let $D$ be a weakly Krull domain and $S$ be a multiplicative subset of $D$. Then $D_S$ is also a weakly Krull domain.
\item Let $\{S_{\lambda}\}$ be a set of multiplicative subsets of $D$ such that $D = \bigcap_{\lambda}D_{S_{\lambda}}$ and
$D_{S_{\lambda}}$ is a weakly Krull domain for all $\lambda$.
If $D = \bigcap_{\lambda}D_{S_{\lambda}}$ has finite character, then
$D$ is a weakly Krull domain.
\end{enumerate}
\end{proposition}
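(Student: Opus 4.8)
The plan is to verify directly the two defining properties of a weakly Krull domain: that $D=\bigcap_{P\in X^1(D)}D_P$, and that this representation has finite character. Everything will rest on two elementary observations. First, since $D\neq K$, for $P\in X^1(D)$ the ring $D_P$ is a one-dimensional local domain, so for any multiplicative subset $S$ of $D$ one has $(D_P)_S=D_P$ if $P\cap S=\emptyset$ and $(D_P)_S=K$ if $P\cap S\neq\emptyset$ (inverting an element of $PD_P$ kills the unique nonzero prime of $D_P$). Second, localization commutes with finite-character intersections of overrings: if $D=\bigcap_\lambda A_\lambda$ inside $K$ with each nonzero nonunit of $D$ a nonunit in only finitely many $A_\lambda$, then $D_S=\bigcap_\lambda(A_\lambda)_S$ for every multiplicative $S\subseteq D$. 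For the one nontrivial inclusion I would argue as follows: given $x\in\bigcap_\lambda(A_\lambda)_S$, write $x=c/d$ with $c,d\in D$, $d\neq 0$; as $d$ is a unit in all but finitely many $A_\lambda$, for the finitely many exceptional indices $\lambda$ pick $s_\lambda\in S$ with $s_\lambda x\in A_\lambda$, and let $s$ be the product of these $s_\lambda$; then $sx\in A_\lambda$ for every $\lambda$, hence $sx\in D$ and $x\in D_S$.

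Granting these, part (1) is short. Since $D$ is weakly Krull, $D=\bigcap_{P\in X^1(D)}D_P$ has finite character, and the two observations give $D_S=\bigcap_{P\in X^1(D)}(D_P)_S=\bigcap D_P$, the last intersection taken over the $P\in X^1(D)$ with $P\cap S=\emptyset$. I would then check that $P\mapsto PD_S$ is a bijection from $\{P\in X^1(D):P\cap S=\emptyset\}$ onto $X^1(D_S)$ with $(D_S)_{PD_S}=D_P$ and $PD_S\cap D=P$; since $\dim(D_S)_{PD_S}=\dim D_P$, the prime $PD_S$ has height one in $D_S$ exactly when $P$ has height one in $D$, so the map is well defined and onto. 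Hence $D_S=\bigcap_{\mathfrak q\in X^1(D_S)}(D_S)_{\mathfrak q}$. Finite character of $D_S$ follows because any nonzero nonunit of $D_S$ may be taken in $D$, where it is a nonzero nonunit of $D$, hence contained in only finitely many $P\in X^1(D)$ and therefore in only finitely many $\mathfrak q=PD_S$. Thus $D_S$ is weakly Krull.

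For part (2) I would use part (1) to rewrite the hypothesis that $D_{S_\lambda}$ is weakly Krull as the finite-character representation $D_{S_\lambda}=\bigcap\{D_P:P\in X^1(D),\ P\cap S_\lambda=\emptyset\}$. Intersecting over $\lambda$ gives $D=\bigcap_{P\in\Lambda}D_P$, where $\Lambda$ consists of those $P\in X^1(D)$ with $P\cap S_\lambda=\emptyset$ for some $\lambda$; this representation still has finite character, because a nonzero nonunit $a$ of $D$ is a nonunit in only finitely many $D_{S_\lambda}$ (by hypothesis on $D=\bigcap_\lambda D_{S_\lambda}$), in each of those it lies in only finitely many of the associated $D_P$, and whenever $a$ is a unit in $D_{S_\lambda}$ it is automatically a unit in every $D_P$ with $P\cap S_\lambda=\emptyset$ --- together these bound the number of $P\in\Lambda$ with $a\in PD_P$. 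It remains to see that $\Lambda=X^1(D)$: for $P\in X^1(D)$, localizing the finite-character intersection $D=\bigcap_\lambda D_{S_\lambda}$ at $D\setminus P$ gives $D_P=\bigcap_\lambda(D_{S_\lambda})_{D\setminus P}$, and, since $\dim D_P=1$, each factor on the right equals $D_P$ or $K$ according as $P\cap S_\lambda=\emptyset$ or not; if $P$ met every $S_\lambda$ we would get $D_P=K$, forcing $P=(0)$, contrary to $P\in X^1(D)$. Therefore $D=\bigcap_{P\in X^1(D)}D_P$ has finite character, so $D$ is weakly Krull.

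The main obstacle, as I see it, is not a deep idea but the careful bookkeeping with localizations: pinning down the correspondence between $X^1(D_S)$ and the height-one primes of $D$ disjoint from $S$, with the compatibilities $(D_S)_{PD_S}=D_P$, $PD_S\cap D=P$ and the equality of dimensions, and, in part (2), making sure the finite-character property genuinely survives both the passage to the rings $D_{S_\lambda}$ and the reassembly of the intersection over $\Lambda$ rather than quietly degrading. One should also note the harmless degenerate cases in which some $D_{S_\lambda}$, or $D_S$ itself, equals $K$; then the relevant index set of height-one primes is empty and all the displayed formulas remain valid.
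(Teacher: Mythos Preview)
Your proof is correct and follows essentially the same approach as the paper. For part (1) the paper simply cites \cite[Proposition 4.7]{ahz93}, whereas you give a self-contained argument; for part (2) both you and the paper write $D$ as a finite-character intersection $\bigcap_{Q\in T}D_Q$ over some $T\subseteq X^1(D)$ and then verify $T=X^1(D)$ by localizing at $D\setminus Q'$ for an arbitrary $Q'\in X^1(D)$ and using that each one-dimensional local $D_Q$ becomes either itself or $K$---the only cosmetic difference is that the paper localizes the intersection $\bigcap_{Q\in T}D_Q$ while you localize the original intersection $\bigcap_\lambda D_{S_\lambda}$.
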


\begin{proof}
(1) \cite[Proposition 4.7]{ahz93}.

(2)  Let
$X^1(D_{S_{\lambda}})$ be the set of height-one prime ideals of $D_{S_{\lambda}}$.
Then $$D= \bigcap_{\lambda}(\bigcap_{P \in X^1(D_{S_{\lambda}})}(D_{S_{\lambda}})_P)$$
and this intersection has finite character. Now, for $P \in X^1(D_{S_{\lambda}})$,
let $Q = P \cap D$. Then $D_Q = (D_{S_{\lambda}})_P$, and hence
$D = \bigcap_{Q \in T} D_Q$ for some $T \subseteq X^1(D)$ and this intersection has finite character.
Next, let $Q'$ be a height-one prime ideal of $D$.
Then $D_{Q'} = \bigcap_{Q \in T} (D_Q)_{D \setminus Q'}$ because the intersection has finite character.
Since $D_Q$ is a one-dimensional local domain, $(D_Q)_{D \setminus Q'} = K$
or $(D_Q)_{D \setminus Q'} = D_Q$. Also, since $D_{Q'}$ is a one-dimensional local domain,
$D_{Q'} = D_Q$ for some $Q \in T$. Thus, $T = X^1(D)$, so $D$ is a weakly Krull domain.
\end{proof}

Let $D$ be an integral domain with quotient field $K$ and $\Gamma$ be a torsion-free monoid with
with quotient group $G$.
For any $f = a_1X^{\alpha_1} + a_2X^{\alpha_2} + \cdots + a_kX^{\alpha_k} \in D[\Gamma]$ with $\alpha_1< \cdots < \alpha_k$,
let $C(f)$ be the ideal of $D[\Gamma]$ generated by $a_1X^{\alpha_1}, a_2X^{\alpha_2}, \dots , a_kX^{\alpha_k}$
and $c(f)$ be the ideal of $D$ generated by $a_1, \dots , a_k$, so $C(f) \subseteq c(f)D[\Gamma]$.
For convenience, we always assume that $f \neq 0$ when we study the $v$-closure $C(f)_v$ of $C(f)$.
Let $N(H) = \{f \in D[\Gamma] \mid C(f)_v= D[\Gamma]\}$ and $H = \{aX^{\alpha} \mid 0 \neq a \in D$ and $\alpha \in \Gamma\}$.
It is easy to see that $H$ and $N(H)$ are
saturated multiplicative subsets of $D[\Gamma]$, $D[\Gamma]_H = K[G]$,
and $D[\Gamma] = D[\Gamma]_{N(H)} \cap D[\Gamma]_H$.

\begin{lemma} \label{lemma1}
Let $D$ be an integral domain with quotient field $K$, $\Gamma$ be a torsion-free monoid with
quotient group $G$, and $N(H) = \{f \in D[\Gamma] \mid C(f)_v= D[\Gamma]\}$.
\begin{enumerate}
\item \begin{eqnarray*}
t\text{-Max}(D[\Gamma]) &=& \{P[\Gamma] \mid P \in t\text{-Max}(D)\}\\
                  &\cup& \{D[S] \mid S \in t\text{-Max}(\Gamma)\}\\
                  &\cup& \{Q \in t\text{-Max}(D[\Gamma]) \mid QK[G] \subsetneq K[G]\}.
\end{eqnarray*}
\item Max$(D[\Gamma]_{N(H)}) = \{P[\Gamma]_{N(H)} \mid P \in t\text{-Max}(D)\} \cup
\{D[S]_{N(H)} \mid S \in t\text{-Max}(\Gamma)\}$.
\item $t$-Max$(D[\Gamma]_{N(H)}) =$ Max$(D[\Gamma]_{N(H)})$.
\end{enumerate}
\end{lemma}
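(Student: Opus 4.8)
The plan is to classify the maximal $t$-ideals of $D[\Gamma]$ according to how they meet the multiplicative sets $H$ and $N(H)$, exploiting $D[\Gamma]_H=K[G]$ and $D[\Gamma]=D[\Gamma]_{N(H)}\cap K[G]$, and then to transport the description to $D[\Gamma]_{N(H)}$. Two facts will be used throughout. First, for every $0\neq f\in D[\Gamma]$ the ideal $C(f)$ is finitely generated, so $C(f)_v=C(f)_t$; hence $f\in N(H)$ exactly when $C(f)$ lies in no maximal $t$-ideal of $D[\Gamma]$. Second, the content identities $(ID[\Gamma])_v=I_vD[\Gamma]$ for a finitely generated ideal $I$ of $D$ and $(D[J])_v=D[J_v]$ for a finitely generated ideal $J$ of $\Gamma$ (standard, cf.\ \cite{eik02}); these force the contraction to $D$ (resp.\ to $\Gamma$) of a $t$-ideal of $D[\Gamma]$ to be a $t$-ideal, and force $P[\Gamma]$ and $D[S]$ to be $t$-ideals of $D[\Gamma]$ whenever $P$ and $S$ are $t$-ideals of $D$ and of $\Gamma$.

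For (1), the inclusion $\supseteq$ amounts to showing that $P[\Gamma]$ is a \emph{maximal} $t$-ideal for $P\in t$-Max$(D)$ and $D[S]$ a maximal $t$-ideal for $S\in t$-Max$(\Gamma)$; I would take this from the description of prime $t$-ideals of semigroup rings in \cite{cs18,eik02,fw21}, or prove it directly by localizing at $D\setminus P$ (resp.\ at the monomial set $\{X^\gamma:\gamma\notin S\}$) and appealing to the $t$-local structure of the base. The third family is contained in the left side trivially. For $\subseteq$, let $Q\in t$-Max$(D[\Gamma])$; if $QK[G]\subsetneq K[G]$ then $Q$ is of the third type, so assume $QK[G]=K[G]$. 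Since $K[G]=D[\Gamma]_H$ this means $Q\cap H\neq\emptyset$, i.e.\ $aX^\alpha\in Q$ with $0\neq a\in D$ and $\alpha\in\Gamma$, and primeness gives $a\in Q$ or $X^\alpha\in Q$. If $a\in Q$ then $P:=Q\cap D\neq 0$ is, by the content identity, a $t$-ideal of $D$; now $P[\Gamma]\subseteq Q$ are both proper $t$-ideals and $P[\Gamma]$ is a maximal $t$-ideal, so $Q=P[\Gamma]$, and then $P\in t$-Max$(D)$, since otherwise $P'[\Gamma]$ for $P\subsetneq P'\in t$-Max$(D)$ would be a proper $t$-ideal properly containing the maximal $t$-ideal $Q$. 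If instead $X^\alpha\in Q$, then $S:=\{\beta\in\Gamma:X^\beta\in Q\}$ is a nonempty prime, hence $t$-, ideal of $\Gamma$, $D[S]\subseteq Q$, and the same maximality argument gives $Q=D[S]$ with $S\in t$-Max$(\Gamma)$.

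For (2), first $P[\Gamma]$ and $D[S]$ are disjoint from $N(H)$: if $f$ lies in $P[\Gamma]$ (resp.\ $D[S]$), so does $C(f)$, whence $C(f)_v\neq D[\Gamma]$ and $f\notin N(H)$. Conversely, given a (nonzero) prime $Q$ of $D[\Gamma]$ with $Q\cap N(H)=\emptyset$, I would let $C(Q)$ be the ideal of $D[\Gamma]$ generated by all monomial terms of all elements of $Q$ and argue that $C(Q)_v\neq D[\Gamma]$: otherwise a finitely generated subideal built from some $f_1,\dots,f_n\in Q$ would $v$-close to $D[\Gamma]$, and recombining $f_1,\dots,f_n$ with sufficiently large monomial shifts (using the total order on $\Gamma$) would produce an element of $Q\cap N(H)$, a contradiction. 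Hence $Q\subseteq C(Q)\subseteq M$ for some $M\in t$-Max$(D[\Gamma])$; the option $MK[G]\subsetneq K[G]$ is excluded, since then $M\cap D=0$ and $C(Q)\subseteq M$ would put some $X^\beta$, a unit of $K[G]$, into $M$, forcing $MK[G]=K[G]$. So by (1), $M=P[\Gamma]$ or $M=D[S]$ with $P\in t$-Max$(D)$, $S\in t$-Max$(\Gamma)$. Thus every prime of $D[\Gamma]$ disjoint from $N(H)$ sits inside one of these, and since the $P[\Gamma]$ and $D[S]$ are pairwise incomparable primes disjoint from $N(H)$ (e.g.\ $P[\Gamma]$ contains nonzero constants while $D[S]$ does not, and distinct $P$, resp.\ $S$, give incomparable ideals), the primes of $D[\Gamma]$ maximal among those disjoint from $N(H)$ are exactly the $P[\Gamma]$ and $D[S]$; localizing yields the asserted description of Max$(D[\Gamma]_{N(H)})$.

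For (3) it suffices to show that $P[\Gamma]_{N(H)}$ and $D[S]_{N(H)}$ are $t$-ideals of $D[\Gamma]_{N(H)}$, for then every maximal ideal of $D[\Gamma]_{N(H)}$ is a $t$-ideal, and consequently every maximal $t$-ideal is a maximal ideal. I would obtain this by the $t$-Nagata-ring mechanism: $N(H)$ is $t$-multiplicative (a Gauss-type content identity), and for a finitely generated ideal $A$ of $D[\Gamma]$ one has $(AD[\Gamma]_{N(H)})^{-1}=A^{-1}D[\Gamma]_{N(H)}$ by clearing a common denominator from $N(H)$; bootstrapping this gives that every ideal of $D[\Gamma]_{N(H)}$ is a $t$-ideal, in particular $t$-Max$(D[\Gamma]_{N(H)})=$ Max$(D[\Gamma]_{N(H)})$, which also re-confirms the maximality claimed in (2). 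The main obstacles I anticipate are the implications ``$P[\Gamma]\subseteq Q\Rightarrow Q=P[\Gamma]$'' (resp.\ for $D[S]$) in (1), which is precisely where the prime $t$-ideal structure of semigroup rings enters, together with the content-algebra computation $C(Q)_v\neq D[\Gamma]$ in (2) and the ``every ideal is a $t$-ideal'' step in (3); the standing torsion-freeness of $\Gamma$ is what makes $K[G]$ a domain and supplies the total order underlying the monomial-shift argument.
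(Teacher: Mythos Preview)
The paper's own proof is a bare list of citations to \cite{ac05,eik02,ac13}, so you are attempting something more ambitious; but your direct argument for (1) has a genuine circularity. In the $\subseteq$ direction you take $Q\in t\text{-Max}(D[\Gamma])$ with $a\in Q\cap D$ nonzero, set $P:=Q\cap D$, and then assert that ``$P[\Gamma]$ is a maximal $t$-ideal, so $Q=P[\Gamma]$''. At this point you only know $P$ is a prime $t$-ideal of $D$, not a maximal one, and the implication $P[\Gamma]\in t\text{-Max}(D[\Gamma])$ is exactly what $\supseteq$ gives you \emph{only} for $P\in t\text{-Max}(D)$. Your subsequent sentence tries to recover $P\in t\text{-Max}(D)$ from $Q=P[\Gamma]$, but that equality is precisely what was unjustified. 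The missing ingredient (and the content of the cited results) is that a maximal $t$-ideal $Q$ meeting $H$ is necessarily \emph{homogeneous}: one shows that the largest homogeneous ideal $Q^h\subseteq Q$ is a nonzero prime $t$-ideal (using that $t$-closures of homogeneous ideals are homogeneous), and then that $(a,X^\alpha)_v=D[\Gamma]$ for any nonzero $a\in D$ and $\alpha\in\Gamma$ forces a homogeneous prime $t$-ideal to be either $P[\Gamma]$ or $D[S]$, never a mixture. Without this step, the dichotomy ``$a\in Q$ or $X^\alpha\in Q$'' does not by itself pin down $Q$.

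Your argument for (2) has a related soft spot. The monomial-shift construction $g=\sum_i f_i X^{\beta_i}$ with disjoint supports gives $C(g)=\sum_i X^{\beta_i}C(f_i)$, not $\sum_i C(f_i)$; since the $X^{\beta_i}$ are typically nonunits in $D[\Gamma]$, one cannot conclude $C(g)_v=D[\Gamma]$ from $(\sum_i C(f_i))_v=D[\Gamma]$. For instance, with $\Gamma=\mathbb N_0$, $f_1=c_1X$, $f_2=c_2$ and any shift $\beta\ge 2$ on $f_2$, one gets $C(g)=(X)$. The route taken in \cite{ac13} avoids this by first using (1) to identify $D[\Gamma]\setminus N(H)$ with $\{0\}\cup\bigcup_P P[\Gamma]\cup\bigcup_S D[S]$, and then reading off $\mathrm{Max}(D[\Gamma]_{N(H)})$ from that description; your use of (1) comes only \emph{after} the problematic $C(Q)$ step. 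Part (3) is fine in outline and matches the Nagata-type result cited from \cite{ac13}.
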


\begin{proof}
(1) \cite[Corollary 1.3]{ac05} and \cite[Corollary 2.4]{eik02}. (2) \cite[Proposition 1.4 and Example 1.6]{ac13}.
(3) \cite[Example 1.6 and Proposition 1.8]{ac13}.
\end{proof}

\begin{proposition} \label{prop3}
Let $D$ be an integral domain with quotient field $K$, $\Gamma$ be a torsion-free monoid with
quotient group $G$, and $N(H) = \{f \in D[\Gamma] \mid C(f)_v= D[\Gamma]\}$. Then
$D[\Gamma]$ is a weakly Krull domain if and only if
$D[\Gamma]_{N(H)}$ is a one-dimensional weakly Krull domain and $K[G]$ is a weakly Krull domain.
\end{proposition}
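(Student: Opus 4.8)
The plan is to use the decomposition $D[\Gamma] = D[\Gamma]_{N(H)} \cap D[\Gamma]_H$ with $D[\Gamma]_H = K[G]$, together with the fact (recorded just before the statement) that $H$ and $N(H)$ are saturated multiplicative subsets, and combine this with Proposition \ref{prop1}.

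\medskip

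\emph{Forward direction.} Suppose $D[\Gamma]$ is a weakly Krull domain. Since $N(H)$ and $H$ are multiplicative subsets, Proposition \ref{prop1}(1) immediately gives that both localizations $D[\Gamma]_{N(H)}$ and $K[G] = D[\Gamma]_H$ are weakly Krull domains (one must first note that $K[G]$ is not a field, which is clear since $\Gamma$, hence $G$, is nontrivial whenever $D[\Gamma]$ is not a field; if $\Gamma$ is trivial the statement is about $D = K$, which is excluded). It remains to see that $D[\Gamma]_{N(H)}$ is one-dimensional. Here I would invoke Lemma \ref{lemma1}(2)--(3): every maximal ideal of $D[\Gamma]_{N(H)}$ is a maximal $t$-ideal, and it is of the form $P[\Gamma]_{N(H)}$ with $P \in t\text{-Max}(D)$ or $D[S]_{N(H)}$ with $S \in t\text{-Max}(\Gamma)$. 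Since $D[\Gamma]$ is weakly Krull it has $t$-dimension one, so each such $P[\Gamma]$ and $D[S]$ has $t$-height one in $D[\Gamma]$; passing to the localization $D[\Gamma]_{N(H)}$ (a ring in which all maximal ideals are $t$-ideals, by (3)) preserves the property that these primes have height one. Thus $D[\Gamma]_{N(H)}$ is one-dimensional.

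\medskip

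\emph{Backward direction.} Suppose $D[\Gamma]_{N(H)}$ is a one-dimensional weakly Krull domain and $K[G]$ is a weakly Krull domain. We want to apply Proposition \ref{prop1}(2) to the family $\{S_\lambda\} = \{N(H), H\}$: we already know $D[\Gamma] = D[\Gamma]_{N(H)} \cap D[\Gamma]_H$, and both localizations are weakly Krull by hypothesis, so the only thing to check is that this two-fold intersection has finite character, i.e., every nonzero nonunit $f \in D[\Gamma]$ lies in only finitely many height-one primes coming from $D[\Gamma]_{N(H)}$ together with those from $K[G]$. On the $K[G]$ side this is automatic because $K[G]$ is weakly Krull, hence of finite $t$-character, hence of finite character in its height-one primes. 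On the $D[\Gamma]_{N(H)}$ side the same holds since that ring is (one-dimensional) weakly Krull. The mild subtlety is that finite character must be verified for the combined intersection over $D[\Gamma]$ itself, i.e., that a fixed $f$ contributes finitely many primes in $X^1(D[\Gamma])$ via each factor; this follows by contracting height-one primes of the localizations back to $D[\Gamma]$ and using that a nonzero element of $D[\Gamma]$ stays nonzero (in particular a nonunit, or a unit) in each localization, so its set of containing height-one primes in the intersection is the (finite) union of the two contracted finite sets. Then Proposition \ref{prop1}(2) yields that $D[\Gamma]$ is weakly Krull.

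\medskip

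\emph{Main obstacle.} The routine parts are the two applications of Proposition \ref{prop1}. The delicate point is the dimension bookkeeping in the forward direction: one must argue carefully that $t$-dimension one of $D[\Gamma]$, combined with the explicit description of $\text{Max}(D[\Gamma]_{N(H)})$ in Lemma \ref{lemma1}(2) and the identification of maximal ideals with maximal $t$-ideals in Lemma \ref{lemma1}(3), forces $D[\Gamma]_{N(H)}$ to be literally one-dimensional (not merely of $t$-dimension one). In particular one should rule out that some prime of $D[\Gamma]_{N(H)}$ of the third type in Lemma \ref{lemma1}(1) survives; but such primes $Q$ satisfy $QK[G] \subsetneq K[G]$, hence $Q$ meets $N(H)$ (as $N(H)$ consists exactly of elements $f$ with $C(f)_v = D[\Gamma]$, i.e. which become units in $K[G]$ after suitable manipulation), so $Q$ does not survive in $D[\Gamma]_{N(H)}$ — this is precisely why only the first two families appear in part (2). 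Making this elimination and the height computation precise is the heart of the argument.
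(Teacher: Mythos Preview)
Your approach matches the paper's: apply Proposition~\ref{prop1} to the decomposition $D[\Gamma] = D[\Gamma]_{N(H)} \cap K[G]$, and use Lemma~\ref{lemma1}(2)--(3) together with $t\text{-}\dim(D[\Gamma])=1$ to get one-dimensionality of $D[\Gamma]_{N(H)}$ in the forward direction. Two points can be streamlined. The finite-character hypothesis in Proposition~\ref{prop1}(2) asks only that each nonzero element be a unit in all but finitely many $D_{S_\lambda}$, which is automatic when there are just two multiplicative sets; the height-one-prime count you give is not the required hypothesis (it essentially reproves the proposition rather than invoking it). Your ``main obstacle'' also dissolves once Lemma~\ref{lemma1}(2) is taken as a black box---it already asserts that only the first two families occur in $\mathrm{Max}(D[\Gamma]_{N(H)})$---and your parenthetical explanation that elements of $N(H)$ become units in $K[G]$ is in fact false (e.g.\ $1+X$ in $D[X]$ lies in $N(H)$ but is not a unit in $K[X,X^{-1}]$), though this is immaterial to the argument.
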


\begin{proof}
Let $H = \{aX^{\alpha} \mid 0 \neq a \in D$ and $\alpha \in \Gamma\}$ and $N = N(H)$.
Then $K[G] = D[\Gamma]_H$ and $D[\Gamma] = D[\Gamma]_N \cap D[\Gamma]_H$, and hence
$D[\Gamma]$ is a weakly Krull domain if and only if
both $D[\Gamma]_N$ and $K[G]$ are weakly Krull domains by Proposition \ref{prop1}.

Now, by Lemma \ref{lemma1}(3),
$t$-Max$(D[\Gamma]_{N(H)}) =$ Max$(D[\Gamma]_{N(H)})$.
Note that if $D[\Gamma]$ is a weakly Krull domain, then
$t$-dim$(D[\Gamma]) =1$, thus $D[\Gamma]_N$ is a
one-dimensional weakly Krull domain.
\end{proof}

Let $K$ be a field and $G$ be an additive torsion-free abelian group.
Then $K[G]$ is a Krull domain if and only if $G$ satisfies the ascending chain condition on
its cyclic subgroups \cite[Theorem 1]{c81}. Hence, the following result recovers the result by Fadinger and Windisch
\cite[Theorem 3.7]{fw21}.

\begin{corollary} \label{coro3}
Let $D$ be an integral domain with quotient field $K$, $\Gamma$ be a torsion-free monoid with quotient group $G$, and assume that
$K[G]$ is a weakly Krull domain. Then $D[\Gamma]$ is a weakly Krull domain if and only if
$D$ is a weakly Krull domain with ht$(P[\Gamma]) = 1$
for all $P \in t$-Max$(D)$ and $\Gamma$ is a weakly Krull monoid with ht$(D[S]) = 1$
for all $S \in t$-Max$(\Gamma)$.
\end{corollary}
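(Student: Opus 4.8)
The plan is to reduce the statement to Proposition~\ref{prop3} together with Lemma~\ref{lemma1}. By Proposition~\ref{prop3}, under the standing hypothesis that $K[G]$ is a weakly Krull domain, $D[\Gamma]$ is a weakly Krull domain if and only if $D[\Gamma]_{N(H)}$ is a one-dimensional weakly Krull domain, so the whole task is to translate the latter condition into the stated conditions on $D$ and $\Gamma$. The key tool is Lemma~\ref{lemma1}(2), which describes $\text{Max}(D[\Gamma]_{N(H)})$ as the disjoint union of the localizations $P[\Gamma]_{N(H)}$ for $P \in t\text{-Max}(D)$ and $D[S]_{N(H)}$ for $S \in t\text{-Max}(\Gamma)$, together with Lemma~\ref{lemma1}(3), which says all maximal ideals of $D[\Gamma]_{N(H)}$ are $t$-ideals. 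Thus $D[\Gamma]_{N(H)}$ is weakly Krull precisely when it has $t$-dimension one (equivalently here, Krull dimension one) and finite $t$-character; but finite character over $N(H)$ follows from finite $t$-character of $D$ and of $\Gamma$ by the standard support argument (each $f \in N(H)$ meets only finitely many of the relevant primes), so the only real content is the dimension-one condition.

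For the dimension-one condition I would argue localization by localization. For $P \in t\text{-Max}(D)$, one has $(D[\Gamma]_{N(H)})_{P[\Gamma]_{N(H)}} = D[\Gamma]_{P[\Gamma]}$, whose Krull dimension equals $\text{ht}(P[\Gamma])$; similarly $(D[\Gamma]_{N(H)})_{D[S]_{N(H)}} = D[\Gamma]_{D[S]}$ has dimension $\text{ht}(D[S])$. Hence $D[\Gamma]_{N(H)}$ is one-dimensional if and only if $\text{ht}(P[\Gamma]) = 1$ for all $P \in t\text{-Max}(D)$ and $\text{ht}(D[S]) = 1$ for all $S \in t\text{-Max}(\Gamma)$. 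Now I use the fact that $\text{ht}(P[\Gamma]) = 1$ forces $\text{ht}(P) = 1$ in $D$ (via the inclusion $P \hookrightarrow P[\Gamma]$ of prime chains, since $D \hookrightarrow D[\Gamma]$ is faithfully flat and $P[\Gamma] \cap D = P$), so every maximal $t$-ideal of $D$ has height one, i.e. $t\text{-dim}(D) = 1$; combined with finite $t$-character this says $D$ is weakly Krull. The symmetric statement for $\Gamma$ — that $\text{ht}(D[S]) = 1$ forces $\text{ht}(S) = 1$, hence $t\text{-dim}(\Gamma) = 1$, hence $\Gamma$ weakly Krull — goes through the monoid analogue of the flatness/contraction argument. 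Conversely, if $D$ is weakly Krull and $\text{ht}(P[\Gamma]) = 1$ for all $P \in t\text{-Max}(D)$, and likewise for $\Gamma$, then the displayed description of $\text{Max}(D[\Gamma]_{N(H)})$ gives a one-dimensional ring, and finite $t$-character of $D$ and of $\Gamma$ gives finite character of the intersection, so $D[\Gamma]_{N(H)}$ is weakly Krull.

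The main obstacle I anticipate is bookkeeping rather than any deep difficulty: one must be careful that the primes appearing in Lemma~\ref{lemma1}(2) are exactly the ones whose heights need to be controlled, and that no spurious primes of $D[\Gamma]_{N(H)}$ contribute — this is precisely why Lemma~\ref{lemma1}(3) (every maximal ideal of $D[\Gamma]_{N(H)}$ is a $t$-ideal) is invoked, so that $t$-dimension and Krull dimension coincide there and one genuinely only sees the two families indexed by $t\text{-Max}(D)$ and $t\text{-Max}(\Gamma)$. A secondary point requiring care is the equivalence between "$\text{ht}(P[\Gamma]) = 1$ for all $P \in t\text{-Max}(D)$" and "$D$ is weakly Krull with that height condition" as phrased in the statement — the statement keeps the height condition as an explicit hypothesis precisely because $t\text{-dim}(D) = 1$ alone does not control $\text{ht}(P[\Gamma])$ (the passage to $D[\Gamma]$ can raise heights when $G$ is badly behaved), so I would not try to absorb it, but rather carry it along as stated, noting that it is automatic once one also knows $K[G]$ is weakly Krull in the more refined form needed for the main theorem. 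The finite-character half of the argument reuses verbatim the support computation already implicit in the proof of Proposition~\ref{prop1}(2), so I would simply cite that pattern rather than repeat it.
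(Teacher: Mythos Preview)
Your approach is essentially the same as the paper's: reduce via Proposition~\ref{prop3} to showing that $D[\Gamma]_{N(H)}$ is a one-dimensional weakly Krull domain, then invoke Lemma~\ref{lemma1}(2) (and (3)) to translate one-dimensionality into the stated height conditions and the finite-character part into the weakly Krull conditions on $D$ and $\Gamma$. The paper's proof is in fact terser than yours---it simply cites Lemma~\ref{lemma1}(2), Proposition~\ref{prop3}, and the intersection $D[\Gamma] = D[\Gamma]_{N(H)} \cap K[G]$ without spelling out the finite-character bookkeeping; one small slip in your write-up is the parenthetical ``each $f \in N(H)$ meets only finitely many of the relevant primes,'' which should read ``each nonzero $f \in D[\Gamma]$,'' since elements of $N(H)$ lie in none of the primes $P[\Gamma]$ or $D[S]$ by definition.
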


\begin{proof}
By Lemma \ref{lemma1}(2), $D[\Gamma]_{N(H)}$ is one-dimensional if and only if
ht$(P[\Gamma]) = 1$ for all $P \in t$-Max$(D)$ and ht$(D[S]) = 1$
for all $S \in t$-Max$(\Gamma)$. Thus, the result follows directly from Lemma \ref{lemma1}(2), Proposition \ref{prop3}
and the fact that $D[\Gamma] = D[\Gamma]_{N(H)} \cap K[G]$.
\end{proof}

\begin{corollary} \cite[Proposition 4.11]{ahz93}
Let $D[X]$ be the polynomial ring over an integral domain $D$.
Then $D[X]$ is a weakly Krull domain if and only if $D$ is a weakly Krull UMT-domain.
\end{corollary}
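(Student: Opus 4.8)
The plan is to recognize $D[X]$ as the semigroup ring $D[\mathbb{N}_0]$, whose quotient group is $\mathbb{Z}$, and to feed this into Corollary~\ref{coro3}. First I would observe that $K[\mathbb{Z}] = K[X,X^{-1}]$ is a localization of the PID $K[X]$, hence itself a PID and in particular a weakly Krull domain, so the hypothesis of Corollary~\ref{coro3} is satisfied. Next, $\mathbb{N}_0$ is the free monoid on one generator, hence factorial, hence a (weakly) Krull monoid, and its unique maximal $t$-ideal is $S = \mathbb{N}_{\geq 1}$, with $D[S] = XD[X]$. I would then check that $\mathrm{ht}(XD[X]) = 1$ holds unconditionally: any nonzero prime $\mathfrak{q} \subsetneq XD[X]$ satisfies $\mathfrak{q}\cap D \subseteq XD[X]\cap D = (0)$, so $\mathfrak{q}$ is an upper to zero; writing $\mathfrak{q} = hK[X]\cap D[X]$ for an irreducible $h$ and clearing denominators forces $X\mid h$ in $K[X]$, whence $\mathfrak{q} = XD[X]$, a contradiction. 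Thus both monoid-side conditions in Corollary~\ref{coro3} are automatic, and that corollary reduces the statement to: for a weakly Krull domain $D$, one has $\mathrm{ht}(P[X]) = 1$ for every $P \in t\text{-Max}(D)$ if and only if $D$ is a UMT-domain.

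To handle that equivalence I would use Lemma~\ref{lemma1}(1) with $\Gamma = \mathbb{N}_0$, which presents $t\text{-Max}(D[X])$ as the union of $\{P[X] : P \in t\text{-Max}(D)\}$, $\{XD[X]\}$, and $\{Q\in t\text{-Max}(D[X]) : QK[X,X^{-1}]\subsetneq K[X,X^{-1}]\}$, together with two elementary facts: every upper to zero in $D[X]$ has height $1$, and, for an irreducible $g\in K[X]$ with $X\nmid g$, one has $gK[X,X^{-1}]\cap D[X] = gK[X]\cap D[X]$, which is again an upper to zero.

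For ``$D$ UMT $\Rightarrow$ $\mathrm{ht}(P[X]) = 1$'': fix $P\in t\text{-Max}(D)$; since $D$ is weakly Krull, $X^1(D) = t\text{-Max}(D)$, so $\mathrm{ht}(P) = 1$. If $\mathrm{ht}(P[X])\geq 2$, choose a prime $0\subsetneq Q_0\subsetneq P[X]$; then $Q_0\cap D\subsetneq P$ forces $Q_0\cap D = (0)$ (otherwise $\mathrm{ht}(P)\geq 2$), so $Q_0$ is an upper to zero properly contained in the proper integral $t$-ideal $P[X]$, contradicting that $Q_0$ is a maximal $t$-ideal. Conversely, assume $\mathrm{ht}(P[X]) = 1$ for all $P\in t\text{-Max}(D)$ and let $Q$ be an upper to zero; if $Q$ were not a maximal $t$-ideal, it would lie properly in some $N\in t\text{-Max}(D[X])$. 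Lemma~\ref{lemma1}(1) leaves three cases: $N = P[X]$ is excluded since $0\subsetneq Q\subsetneq P[X]$ would contradict $\mathrm{ht}(P[X]) = 1$; $N = XD[X]$ is excluded because $X\notin Q$ would then force $Q = \bigcap_{n\geq 1}X^nD[X] = (0)$; and if $NK[X,X^{-1}]\subsetneq K[X,X^{-1}]$, then $N\subseteq gK[X]\cap D[X]$ for some irreducible $g$ with $X\nmid g$, an upper to zero of height $1$, which is incompatible with $Q\subsetneq N$. Hence every upper to zero is a maximal $t$-ideal, i.e., $D$ is a UMT-domain. Combining this with Corollary~\ref{coro3} in both directions yields the result.

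I expect the main obstacle to be the case analysis in the converse direction, namely ruling out that an upper to zero sits properly inside a maximal $t$-ideal $N$ of the third type (with $NK[X,X^{-1}]$ proper); the key point is that such an $N$ is nonetheless trapped inside the height-one upper to zero $gK[X]\cap D[X]$, so it cannot properly contain the height-one prime $Q$. The remaining ingredients — that uppers to zero and $XD[X]$ have height one, and the identity $gK[X,X^{-1}]\cap D[X] = gK[X]\cap D[X]$ for $X\nmid g$ — are routine denominator-clearing arguments once they are isolated.
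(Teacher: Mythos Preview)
Your proof is correct and follows essentially the same route as the paper: both recognize $D[X]=D[\mathbb{N}_0]$, verify that the monoid-side hypotheses of Corollary~\ref{coro3} (weakly Krull monoid, $\mathrm{ht}(D[S])=1$, $K[\mathbb{Z}]$ weakly Krull) hold automatically, and then reduce to the equivalence ``$\mathrm{ht}(P[X])=1$ for all $P\in t\text{-Max}(D)$ $\Leftrightarrow$ $D$ is UMT'' for a weakly Krull domain $D$. The only difference is that the paper disposes of this last equivalence in one parenthetical remark (``because $t\text{-dim}(D)=1$''), treating it as a known consequence of the UMT characterization, whereas you supply a self-contained case analysis via Lemma~\ref{lemma1}(1); your extra work is sound but not needed, since the fact that for $t\text{-dim}(D)=1$ the UMT property is equivalent to $\mathrm{ht}(P[X])=1$ for every height-one prime $P$ is standard.
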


\begin{proof}
Let $K$ be the quotient field of $D$. We may assume that $D$ is not a field, since if $D=K$ the statement is trivial.
Let $\Gamma = \{0, 1, 2, \dots \}$. Then $\Gamma$ is a torsion-free monoid under addition,
$D[X] = D[\Gamma]$, and $S: = \Gamma \setminus \{0\}$ is the unique nonempty prime ideal of $\Gamma$.
Furthermore, $\Gamma$ is a weakly Krull monoid, ht$(D[S])=1$, and $K[\langle \Gamma \rangle]$ is a UFD.
Hence, by Corollary \ref{coro3}, $D[\Gamma]$ is a weakly Krull domain if and only if
$D$ is a weakly Krull domain with ht$(P[\Gamma])=1$ for all $P \in t$-Max$(D)$,
if and only if $D$ is a weakly Krull UMT-domain (because $t$-dim$(D)=1$).
\end{proof}

\section{Weakly Krull semigroup rings}

In this section, we completely characterize when $D[\Gamma]$ is a weakly Krull domain.
Let $H$ be a monoid and $S$ be the set of non-invertible elements.
As in \cite[Theorem 15.4]{h98}, we say that $H$ is primary
if $S$ is the only non-empty prime ideal of $H$.

\begin{lemma} \label{lemma2}
Let $\Gamma$ be a primary monoid with quotient group $G$,
$S$ be the maximal ideal of $\Gamma$, and $\bar{\Gamma}$ be
the integral closure of $\Gamma$ in $G$. Let $K$ be a field,
and assume that $K[G]$ is a weakly Krull domain. Then ht$(K[S]) =1$ if and only if
$\bar{\Gamma}$ is a valuation monoid.
\end{lemma}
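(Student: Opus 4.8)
The plan is to strip off the primary structure, reduce to the case where $\Gamma$ is root-closed, and then identify $\mathrm{ht}(K[\mathfrak m])$ with the Krull dimension of a localisation of the monoid ring that turns out to be a valuation domain exactly when $\bar\Gamma$ is a valuation monoid. First I would set the stage. Since $\Gamma$ is primary, $S$ is its only nonempty prime ideal, hence its only maximal $t$-ideal, $\Gamma_S=\Gamma$ and $\bar\Gamma_S=\bar\Gamma$, so ``$\bar\Gamma$ is a valuation monoid'' means precisely ``$\Gamma$ is a UMT-monoid''; also $K[S]$ is a nonzero prime of $K[\Gamma]$. I would then record three preliminary facts. (i) $K[\bar\Gamma]$ is integral over $K[\Gamma]$, since each $X^\gamma$ with $\gamma\in\bar\Gamma$ is a root of $T^n-X^{n\gamma}$ for suitable $n$; hence Lying Over, Going Up and Incomparability hold for $K[\Gamma]\subseteq K[\bar\Gamma]$. (ii) Writing $\bar S$ for the non-units of $\bar\Gamma$, one has $\bar S\cap\Gamma=S$: a non-unit of $\Gamma$ cannot become a unit of $\bar\Gamma$, because if $u\in\Gamma$ and $-u\in\bar\Gamma$ then $nu$ is a unit of $\Gamma$ for some $n$, and since $S$ is an ideal this forces $u\notin S$. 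Consequently $K[\bar S]$ is a prime of $K[\bar\Gamma]$ with $K[\bar S]\cap K[\Gamma]=K[S]$, i.e. lying over $K[S]$. (iii) $\Gamma$ being primary forces every chain of primes in $\bar\Gamma$ to have length at most one --- a longer chain would, via the corresponding monomial primes and Incomparability for $K[\Gamma]\subseteq K[\bar\Gamma]$, produce a too-long chain of primes of $\Gamma$ --- and also $\bar\Gamma\neq G$ (otherwise, for $s\in S$, some $ns$ would be a unit of $\Gamma$ although $ns\in S$); hence a valuation monoid $\bar\Gamma$ is necessarily of rank one, so it is itself primary with maximal ideal $\bar S$ and $\overline{\bar\Gamma}=\bar\Gamma$.

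With this in hand I would carry out the reduction to the root-closed case. If $\mathrm{ht}(K[S])=1$, then Incomparability gives $\mathrm{ht}(K[\bar S])\le1$, hence $=1$. Conversely, assuming $\bar\Gamma$ is a (rank-one) valuation monoid, a chain $(0)\subsetneq\mathfrak q_1\subsetneq K[S]$ in $K[\Gamma]$ would lift (Lying Over, then Going Up) to $\mathfrak Q_1\subsetneq\mathfrak Q_2$ in $K[\bar\Gamma]$ with $\mathfrak Q_2\cap K[\Gamma]=K[S]$; since $X^\alpha\in\mathfrak Q_2$ for some $\alpha\in S$, the set $\{\epsilon\in\bar\Gamma:X^\epsilon\in\mathfrak Q_2\}$ is a nonempty proper prime ideal of the rank-one valuation monoid $\bar\Gamma$, hence equals $\bar S$, so $K[\bar S]\subseteq\mathfrak Q_2$, and then $\mathfrak Q_2=K[\bar S]$ by Incomparability --- but $(0)\subsetneq\mathfrak Q_1\subsetneq K[\bar S]$ contradicts $\mathrm{ht}(K[\bar S])=1$. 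So it is enough to prove the equivalence for $\Gamma$ root-closed and to apply that to $\bar\Gamma$.

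Then I would treat the root-closed case. Let $V$ be root-closed with maximal ideal $\mathfrak m_V$, and put $B:=K[V]_{K[\mathfrak m_V]}$, a quasilocal domain with $\dim B=\mathrm{ht}(K[\mathfrak m_V])$; since $V$ is root-closed, $K[V]$, and hence $B$, is integrally closed (Gilmer). If $\mathrm{ht}(K[\mathfrak m_V])=1$, then $B$ is a one-dimensional integrally closed quasilocal domain, hence a valuation domain, and testing its valuation on the monomials $X^\gamma$ ($\gamma\in G$) gives $X^\gamma\in B$ or $X^{-\gamma}\in B$, while $X^\gamma\in B$ forces $\gamma\in V$: writing $X^\gamma=f/g$ with $f,g\in K[V]$ and $g\notin K[\mathfrak m_V]$, the polynomial $g$ has an exponent $\epsilon_0\in V^{\times}$, every exponent of $f=X^\gamma g$ lies in $V$, so $\gamma+\epsilon_0\in V$, and $-\epsilon_0\in V$ then yields $\gamma\in V$. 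Thus $V$ is a valuation monoid. For the converse, if $V$ is a valuation monoid --- of rank one in the situation at hand --- I would fix $v\colon G\to\mathbb R$ with $V=v^{-1}([0,\infty))$ and $V^{\times}=\ker v$, note that $K[V^{\times}]$ is a domain because $V^{\times}$ is torsion-free, and deduce that $f\mapsto\min\{v(\gamma):X^\gamma\text{ occurs in }f\}$ is a valuation on $K[G]$; a short computation --- divide the monomial of least exponent out of numerator and denominator --- identifies its valuation ring with $B$, so $B$ is a rank-one valuation domain and $\mathrm{ht}(K[\mathfrak m_V])=1$.

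The hardest parts are the two ``bridge'' facts between monoid rings and valuation theory. For the implication ``$\mathrm{ht}(K[\mathfrak m_V])=1\Rightarrow V$ a valuation monoid'' I rely on the standard --- but genuinely non-trivial --- theorem that a one-dimensional integrally closed quasilocal domain is a valuation domain, together with $K[V]$ being integrally closed when $V$ is root-closed, which I would cite from Gilmer. For the converse the delicate point is verifying that the naive ``lowest-exponent'' map on $K[G]$ really is multiplicative --- this is exactly where torsion-freeness is used, via $K[V^{\times}]$ being an integral domain --- and that its valuation ring equals the localisation $B$ rather than merely containing it. The standing hypothesis that $K[G]$ is weakly Krull is available throughout (it keeps the heights in play finite); the scheme above is valuation-theoretic, and the remaining steps --- Lying Over / Going Up / Incomparability for $K[\Gamma]\subseteq K[\bar\Gamma]$, the bound on chains of primes in $\bar\Gamma$, and the description of the prime ideals of a rank-one valuation monoid --- are routine once the monoid statements are transported to the monomial primes of the semigroup rings.
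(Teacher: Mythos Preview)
Your reduction to the root-closed case and your treatment of the implication ``$\bar\Gamma$ valuation $\Rightarrow \mathrm{ht}(K[S])=1$'' are sound; the valuation-theoretic construction for the latter is a pleasant alternative to the paper's direct prime-chasing. The genuine gap is in the other implication of the root-closed case. You write that you ``rely on the standard --- but genuinely non-trivial --- theorem that a one-dimensional integrally closed quasilocal domain is a valuation domain''. No such theorem exists. For any field $K$ and $t$ transcendental over $K$, the ring $R=K+xK(t)[[x]]$ is local with maximal ideal $M=xK(t)[[x]]$, one-dimensional (since $R[x^{-1}]=K(t)((x))$ is a field, and any prime containing $x$ contains $M$ because $g\in M$ implies $g^2\in xR$), and integrally closed (an integral element lies in $K(t)[[x]]$, and reducing modulo $M$ shows its constant term is algebraic over $K$ inside $K(t)$, hence lies in $K$); yet $x$ and $tx$ are incomparable under divisibility, so $R$ is not a valuation domain. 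Thus your deduction ``$B$ one-dimensional integrally closed local $\Rightarrow$ $B$ valuation $\Rightarrow$ $V$ valuation'' fails as stated.

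This is precisely the place where the hypothesis that $K[G]$ is weakly Krull has to be used, and your outline never invokes it for this direction. The paper argues by contradiction: assuming $V$ is root-closed but not a valuation monoid, choose $a,b\in V$ with neither dividing the other and set $f=X^a+X^b$. Because $K[G]$ is weakly Krull, $fK[G]$ admits a primary decomposition $Q_1\cap\cdots\cap Q_k$ with $\mathrm{ht}(\sqrt{Q_i})=1$. A content argument via $(C(f)C(g))_v=C(fg)_v$ shows $fK[G]\cap K[V]\subseteq K[\mathfrak m_V]$, so some $Q_i\cap K[V]\subseteq K[\mathfrak m_V]$; the height-one hypothesis then forces $K[\mathfrak m_V]=\sqrt{Q_i\cap K[V]}$, making $K[\mathfrak m_V]$ the contraction of a prime of $K[G]$ --- impossible since $K[\mathfrak m_V]K[G]=K[G]$. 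The weakly Krull assumption is doing essential work here; it cannot be replaced by the general commutative-algebra statement you invoke, because that statement is false.
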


\begin{proof}
Note that $\bar{\Gamma} = \{\alpha \in G \mid n\alpha \in \Gamma$ for some integer $n \geq 1\}$,
so $\bar{\Gamma}$ is a primary monoid.
Hence, if we let $\bar{S}$ be the set of nonunits of $\bar{\Gamma}$,
then $\bar{S}$ is the unique non-empty prime ideal of $\bar{\Gamma}$ and $\bar{S} \cap \Gamma = S$.

\vspace{.2cm}
\noindent
{\Large {\bf Claim.}} ht$(K[S]) =$ ht$(K[\bar{S}])$. (Proof. Let $Q_0 \subsetneq Q_1 \subsetneq \cdots \subsetneq Q_n = K[\bar{S}]$ be a chain
of prime ideals of $K[\bar{\Gamma}]$. Then, since $K[\bar{\Gamma}]$ is integral over $K[\Gamma]$,
$$Q_0 \cap K[\Gamma] \subsetneq Q_1 \cap K[\Gamma] \subsetneq \cdots \subsetneq Q_n \cap K[\Gamma] = K[S]$$
is a chain of prime ideals of $K[\Gamma]$ \cite[Theorem 44]{kap}.
Hence ht$(K[S]) \geq$ ht$(K[\bar{S}])$. Next, let $P_0 \subsetneq P_1 \subsetneq \cdots \subsetneq P_m = K[S]$ be a chain
of prime ideals of $K[\Gamma]$. Then there is a chain $M_0 \subsetneq M_1 \subsetneq \cdots \subsetneq M_m$ of
 prime ideals of $K[\bar{\Gamma}]$ such that $M_i \cap K[\Gamma] = P_i$ for $i=0,1, \dots , m$ \cite[Theorem 44]{kap}.
 Note that $M_m \cap K[\Gamma] = K[S]$, so $K[\bar{S}] \subseteq M_m$. Note also that
 $K[\bar{S}] \cap K[\Gamma] = K[S]$. Thus, $M_m = K[\bar{S}]$ \cite[Theorem 44]{kap}, so
 ht$(K[\bar{S}]) \geq$ ht$(K[S])$. Hence, ht$(K[\bar{S}]) =$ ht$(K[S])$.)

\vspace{.2cm}

 Now, by Claim, we may assume that $\Gamma$ is integrally closed.

\vspace{.2cm}
 $(\Rightarrow)$ Assume to the contrary that $\Gamma$ is not a valuation monoid. Then
 there are $a, b \in \Gamma$ such that neither $a$ divides $b$ nor $b$ divides $a$ in $\Gamma$.
 Now, let $f = X^a +X^b \in K[\Gamma]$. Since $K[G]$ is a weakly Krull domain,
 $fK[G] = Q_1 \cap \dots \cap Q_k$ for some primary
 ideals $Q_i$ of $K[G]$ with ht$(\sqrt{Q_i}) = 1$ \cite[Theorem 3.1]{amz92}. Hence,
 $$fK[G] \cap K[\Gamma] = \bigcap_{i=1}^k (Q_i \cap K[\Gamma])$$
 and each $Q_i \cap K[\Gamma]$ is a primary ideal.

 Now, assume $fK[G] \cap K[\Gamma] \nsubseteq K[S]$.
Then there is an element $g \in K[G]$ such that $fg \in K[\Gamma] \setminus K[S]$. Hence,
 $C(fg) \nsubseteq K[S]$, and since $(C(f)C(g))_v = C(fg)_v$ \cite[Corollary 3.9]{aa82}, we have
 $C(f)C(g) \nsubseteq K[S]$. Note that $K$ is a field, so
 $C(f) = (X^a, X^b)K[\Gamma]$ and $C(g) = (X^{\alpha_1}, \dots , X^{\alpha_l})K[\Gamma]$
 for some $\alpha_i \in G$, whence either $X^{a+ \alpha_i} = X^aX^{\alpha_i} \not\in K[S]$ for some $i$ or
 $X^{b+ \alpha_j} = X^bX^{\alpha_j} \not\in K[S]$ for some $j$.
 We may assume that $X^{a+ \alpha_i} \not\in K[S]$. Then $a+\alpha_i \in \Gamma \setminus S$, so
 $a+\alpha_i$ is a unit of $\Gamma$, whence $a+\alpha_i + \beta =0$ for some $\beta \in \Gamma$.
 Thus, $b = a+ (b+ \alpha_i) + \beta$ and $(b+ \alpha_i) + \beta \in \Gamma$, which means that
 $a$ divides $b$ in $\Gamma$, a contradiction.

 Hence, $fK[G] \cap K[\Gamma] \subseteq K[S]$,
 and thus $Q_i \cap K[\Gamma] \subseteq K[S]$ for some $i$ with $1 \leq i \leq k$. Note that ht$(K[S])=1$ by assumption,
 so $K[S] = \sqrt{Q_i \cap K[\Gamma]}$, which implies $(K[S])K[G] \subsetneq K[G]$, a contradiction.
 Thus, $\Gamma$ is a valuation monoid.

 $(\Leftarrow)$ Assume that $(0) \neq Q \subsetneq K[S]$ is a chain of prime ideals of $K[\Gamma]$.
 Then, for $0 \neq f \in Q$, there is an $\alpha \in \Gamma$ such that
 $f = X^{\alpha}g$ for some $g \in K[\Gamma]$ with $C(g) = K[\Gamma]$.
 Hence, $g \not\in K[S]$, so $g \not\in Q$, and thus $X^{\alpha} \in Q$.
 But, in this case, if $S_1 = \{\alpha \in \Gamma \mid X^{\alpha} \in Q\}$, then
 $S_1$ is a non-empty prime ideal of $\Gamma$ and $K[S_1] \subseteq Q \subsetneq K[S]$. Thus,
 $S_1 \subsetneq S$, a contradiction. Therefore, ht$(K[S])=1$.
\end{proof}

\begin{lemma} \label{lemma3}
Let $G$ be a torsion-free abelian group, $D$ be a one-dimensional
local domain with maximal ideal $P$, $K$ be the quotient field of $D$,
and $\bar{D}$ be the integral closure of $D$ in $K$.
Assume that $K[G]$ is a weakly Krull domain. Then ht$(P[G]) =1$ if and only if
$\bar{D}$ is a Pr\"ufer domain.
\end{lemma}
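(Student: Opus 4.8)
The plan is to follow the template of Lemma~\ref{lemma2}, preceded by two reductions that handle the fact that $\bar D$ need not be local. Since $\bar D$ is integral over the local ring $D$, every maximal ideal of $\bar D$ lies over $P$; hence $\bar D$ is one-dimensional and, for each $M\in\mathrm{Max}(\bar D)$, the ring $\bar D_M$ is a one-dimensional integrally closed local domain with quotient field $K$. Using that $\bar D[G]$ is integral over $D[G]$, that $M[G]$ is a prime of $\bar D[G]$ contracting to $P[G]$, and that by incomparability the primes of $\bar D[G]$ over $P[G]$ are exactly the ideals $M[G]$, I would deduce (lifting chains from the bottom by lying over and going up, contracting chains with strictness from incomparability) that $\mathrm{ht}(P[G])=\sup_{M}\mathrm{ht}(M[G])$, and then, localising $\bar D[G]$ at $\bar D\setminus M$, that $\mathrm{ht}(M[G])=\mathrm{ht}((M\bar D_M)[G])$. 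As $\bar D$ is a Pr\"ufer domain precisely when every $\bar D_M$ is a valuation domain, this reduces the lemma to the following local statement (for which one should assume $G\neq\{0\}$, as is the case in all applications): \emph{if $V$ is a one-dimensional integrally closed local domain with maximal ideal $\mathfrak n$ and quotient field $K$ and $K[G]$ is a weakly Krull domain, then $\mathrm{ht}(\mathfrak n[G])=1$ if and only if $V$ is a valuation domain.}

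For the implication from ``$V$ is a valuation domain'' (which will not use that $K[G]$ is weakly Krull), I would assume that $(0)\neq Q\subsetneq\mathfrak n[G]$ is a chain of primes of $V[G]$, pick $0\neq f\in Q$, and, multiplying by a unit monomial, arrange that $0$ lies in the support of $f$. Since $V$ is a valuation domain the coefficient ideal $c(f)$ is principal, so $f=ah$ with $h\in V[G]$ and $c(h)=V$; then $h\notin\mathfrak n[G]\supseteq Q$, hence $a\in Q\cap V$, so $Q\cap V$ is a nonzero prime of the one-dimensional domain $V$ and therefore equals $\mathfrak n$, giving $\mathfrak n[G]\subseteq Q$ --- a contradiction. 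This is the easy half.

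For the converse, assuming $\mathrm{ht}(\mathfrak n[G])=1$ but that $V$ is not a valuation domain, I would pick nonzero $a,b\in\mathfrak n$ with $(a,b)V$ not principal, fix $0\neq\gamma\in G$, and put $f=a+bX^{\gamma}\in V[G]\subseteq K[G]$. Since $K[G]$ is weakly Krull, \cite[Theorem~3.1]{amz92} gives $fK[G]=Q_1\cap\dots\cap Q_k$ with each $Q_i$ primary and $\sqrt{Q_i}$ of height one, so $fK[G]\cap V[G]=\bigcap_i(Q_i\cap V[G])$ with each $Q_i\cap V[G]$ primary and containing $f$. The crucial point --- mirroring Lemma~\ref{lemma2} --- is the claim $fK[G]\cap V[G]\subseteq\mathfrak n[G]$. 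Granting it, $\mathfrak n[G]\supseteq Q_i\cap V[G]$ for some $i$, whence $\mathfrak p:=\sqrt{Q_i}\cap V[G]$ is a prime contained in $\mathfrak n[G]$ with $f\in\mathfrak p$, so $\mathfrak p\neq(0)$ and, by height one, $\mathfrak p=\mathfrak n[G]$; then $\mathfrak n K[G]=(\mathfrak n[G])K[G]\subseteq\sqrt{Q_i}\subsetneq K[G]$, which is absurd because $\mathfrak n$ contains a nonzero element of $V$, a unit of $K$.

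The main obstacle is the claim $fK[G]\cap V[G]\subseteq\mathfrak n[G]$. To establish it I would take $h=fg\in V[G]$ with $0\neq g\in K[G]$, assume $h\notin\mathfrak n[G]$, i.e.\ $c(h)=V$, and invoke the content formula $(c(f)c(g))_v=c(fg)_v$, which holds since $V$ is integrally closed --- it can be obtained from \cite[Corollary~3.9]{aa82} applied to the $G$-graded domain $V[G]$, in which every monomial is a unit, so that the graded content of an element is the extension of its coefficient ideal, together with the descent of the $v$-operation on finitely generated ideals along $V\hookrightarrow V[G]$. This gives $(c(f)c(g))_v=V$; as $c(f)=(a,b)V$ and $c(g)$ are finitely generated, the finitely generated ideal $c(f)c(g)$ is $t$-invertible, hence so is $(a,b)V$; but a $t$-invertible ideal of a local domain is invertible, hence principal, contradicting the choice of $a,b$. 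Verifying this content formula, and the elementary fact that a finitely generated $t$-invertible ideal of a local domain is principal, is the only genuinely technical point; the rest is the ideal-theoretic bookkeeping above. It is exactly the integral closedness of $V=\bar D_M$ that makes the content formula --- and hence the whole argument --- go through, which is the reason the condition on $\bar D$ in the statement is Pr\"uferness.
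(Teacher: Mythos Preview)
Your proposal is correct and follows essentially the same architecture as the paper's proof: both reduce to the integrally closed case via going-up/incomparability for the integral extension $D[G]\subseteq\bar D[G]$ (the paper states this as a Claim that $\mathrm{ht}(P[G])=\max\{\mathrm{ht}(Q[G])\mid Q\in\mathrm{Max}(\bar D)\}$), both handle the easy direction by the content-is-principal argument, and both attack the hard direction by choosing $f=a+bX^{\gamma}$, invoking the primary decomposition of $fK[G]$ in the weakly Krull domain $K[G]$, and deriving a height contradiction.

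The one substantive difference is in how the key containment $fK[G]\cap V[G]\subseteq\mathfrak n[G]$ is obtained. The paper uses the explicit identity $fK[G]\cap D[G]=f\,c(f)^{-1}[G]$ (also from \cite[Corollary 3.9]{aa82}), which together with $(a,b)(a,b)^{-1}\subseteq Q$ gives the containment in one line. You instead use the content formula $(c(f)c(g))_v=c(fg)_v$ and argue that $c(fg)=V$ would force $(a,b)$ to be $t$-invertible, hence principal in the one-dimensional local domain $V$. Both routes rest on the same source and on integral closedness; yours is slightly more indirect but perfectly valid. One small caution: your parenthetical ``a $t$-invertible ideal of a local domain is invertible'' is not true without qualification---you need that the maximal ideal is a $t$-ideal, which holds here because $V$ is one-dimensional. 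Since you have already recorded that $V$ is one-dimensional, the argument goes through, but the sentence as written overstates the general fact.
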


\begin{proof}
The proof of this lemma is similar to that of Lemma \ref{lemma2}, but we give the proof
for the convenience of the reader.

{\Large {\bf Claim.}} ht$(P[G]) =$ max$\{$ht$(Q[G]) \mid Q$ is a maximal ideal of $\bar{D}\}$.
(Proof. Let $Q$ be a maximal ideal of $\bar{D}$, and let
$Q_0 \subsetneq Q_1 \subsetneq \cdots \subsetneq Q_n = Q[G]$ be a chain
of prime ideals of $\bar{D}[G]$. Then, since $\bar{D}[G]$ is integral over $D[G]$,
$$Q_0 \cap D[G] \subsetneq Q_1 \cap D[G] \subsetneq \cdots \subsetneq Q_n \cap D[G] = P[G]$$
is a chain of prime ideals of $D[\Gamma]$.
Hence ht$(P[G]) \geq$ ht$(Q[G])$. Next, let $P_0 \subsetneq P_1 \subsetneq \cdots \subsetneq P_m = P[G]$ be a chain
of prime ideals of $D[G]$. Then there is a chain $M_0 \subsetneq M_1 \subsetneq \cdots \subsetneq M_m$ of
 prime ideals of $\bar{D}[G]$ such that $M_i \cap D[G] = P_i$ for $i=0,1, \dots , m$.
 Note that $M_m \cap D[G] = P[G]$, so if we let $Q' = M_m \cap \bar{D}$, then $Q'$ is a maximal ideal of $\bar{D}$
 and $Q'[G] \subseteq M_n$. Note also that $Q'[G] \cap D[G] = P[G]$, thus $Q'[G] = M_n$ \cite[Theorem 44]{kap}.
Hence, ht$(P[G]) \leq$ ht$(Q'[G])$.)

\vspace{.2cm}
 Now, by Claim, we may assume that $D$ is an integrally closed one-dimensional domain
 (which need not be local). Then it suffices to show that
 ht$(P[G])=1$ for all maximal ideals $P$ of $D$ if and only if $D$ is a Pr\"ufer domain.

 $(\Rightarrow)$ Assume to the contrary that $D$ is not a Pr\"ufer domain. Then
 there are $a, b \in D \setminus \{0\}$ such that $(a,b)$ is not an invertible ideal of $D$. Hence
 $(a,b)(a,b)^{-1} \subseteq Q$ for some maximal ideal $Q$ of $D$.
 For $0 \neq \alpha \in G$, let $f = a + bX^{\alpha}$.
 Then, by \cite[Corollary 3.9]{aa82}, $$fK[G] \cap D[G] = fc(f)^{-1}[G],$$
 hence $fK[G] \cap D[G] \subsetneq Q[G]$.
 Since $K[G]$ is a weakly Krull domain, $fK[G]$ has a primary decomposition whose
 associated prime ideals have height-one \cite[Theorem 3.1]{amz92}, say, $fK[G] = Q_1 \cap \dots \cap Q_k$. Then
 $$fK[G] \cap D[G] = \bigcap_{i=1}^k (Q_i \cap D[G])$$
 and each $Q_i \cap D[G]$ is a primary ideal. Moreover, at least one of the $Q_i \cap D[G]$
 is contained in $Q[G]$, so ht$(Q[G]) \geq 2$, a contradiction. Thus,
 $D$ is a Pr\"ufer domain.

 $(\Leftarrow)$  Let $Q$ be a maximal ideal of $D$. Then ht$(Q[G]) =$ ht$(QD_Q[G])$
 and $D_Q$ is a valuation domain. Hence, we may assume that $D$ is a valuation domain. Then
 it is easy to see that ht$(Q[G]) = 1$ as in the proof $(\Leftarrow)$ of Lemma \ref{lemma2}.
\end{proof}

\begin{theorem} \label{theorem7}
Let $D$ be an integral domain with quotient field $K$ and $\Gamma$ be a torsion-free monoid with
quotient group $G$.
Then $D[\Gamma]$ is a weakly Krull domain if and only if $D$ is a weakly Krull UMT-domain,
$\Gamma$ is a weakly Krull UMT-monoid, and $K[G]$ is a weakly Krull domain.
\end{theorem}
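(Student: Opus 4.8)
The plan is to deduce the statement from Corollary \ref{coro3} by reducing the two height conditions appearing there to the local, resp.\ primary, situation, where Lemmas \ref{lemma3} and \ref{lemma2} apply. For the forward implication, if $D[\Gamma]$ is a weakly Krull domain, then $K[G] = D[\Gamma]_H$ is a weakly Krull domain by Proposition \ref{prop3}, and then Corollary \ref{coro3} yields that $D$ is a weakly Krull domain with ht$(P[\Gamma]) = 1$ for all $P \in t$-Max$(D)$ and that $\Gamma$ is a weakly Krull monoid with ht$(D[S]) = 1$ for all $S \in t$-Max$(\Gamma)$. For the converse, assuming $D$ is a weakly Krull UMT-domain, $\Gamma$ is a weakly Krull UMT-monoid and $K[G]$ is a weakly Krull domain, I will verify the same two height conditions and feed them back into Corollary \ref{coro3}. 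So the whole theorem reduces to the two equivalences: (a) for $D$ weakly Krull, ht$(P[\Gamma]) = 1$ for all $P \in t$-Max$(D)$ if and only if $D$ is a UMT-domain; and (b) for $\Gamma$ weakly Krull, ht$(D[S]) = 1$ for all $S \in t$-Max$(\Gamma)$ if and only if $\Gamma$ is a UMT-monoid. Here one uses that a weakly Krull domain (resp.\ monoid) has $t$-dimension one, so that $t$-Max$(D) = X^1(D)$ (resp.\ $t$-Max$(\Gamma) = X^1(\Gamma)$) and each of these primes has height one.

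For (a), the key point is that heights are unchanged under the relevant localizations. Since $G$ is the quotient group of $\Gamma$, we have $D[G] = D[\Gamma]_T$ with $T = \{X^{\beta} \mid \beta \in \Gamma\}$ (a multiplicative set because $\Gamma$ is a monoid), and $P[\Gamma] \cap T = \emptyset$ because each $X^{\beta}$ has unit coefficient; hence $P[\Gamma]D[G] = P[G]$ and ht$(P[\Gamma]) = $ ht$(P[G])$. Localizing once more at $D \setminus P$ gives ht$(P[G]) = $ ht$(PD_P[G])$. Now $D_P$ is a one-dimensional local domain with maximal ideal $PD_P$ whose integral closure is $\bar{D}_P$ (integral closure commutes with localization), so Lemma \ref{lemma3} --- applicable since $K[G]$ is a weakly Krull domain --- gives ht$(PD_P[G]) = 1$ if and only if $\bar{D}_P$ is a Pr\"ufer domain. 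Since, by definition, $D$ is a UMT-domain exactly when $\bar{D}_P$ is a Pr\"ufer domain for every $P \in t$-Max$(D)$, equivalence (a) follows.

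Equivalence (b) is obtained in the same way. As $S$ is a proper prime ideal of $\Gamma$ we have $0 \notin S$, so inverting $D \setminus \{0\}$ gives ht$(D[S]) = $ ht$(K[S])$ in $K[\Gamma]$, and inverting $\{X^{\alpha} \mid \alpha \in \Gamma \setminus S\}$ (a multiplicative set since $\Gamma \setminus S$ is a submonoid, $S$ being prime) gives ht$(K[S]) = $ ht$(K[S\Gamma_S])$ in $K[\Gamma_S]$. The monoid $\Gamma_S$ is primary: since $\Gamma$ is weakly Krull it has $t$-dimension one, so $S$ has height one and the only non-empty prime ideal of $\Gamma_S$ is $S\Gamma_S$. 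Hence Lemma \ref{lemma2} applies (again using that $K[G]$ is a weakly Krull domain) and gives ht$(K[S\Gamma_S]) = 1$ if and only if $\overline{\Gamma_S} = \bar{\Gamma}_S$ is a valuation monoid; as $\Gamma$ is by definition a UMT-monoid precisely when this holds for all $S \in t$-Max$(\Gamma)$, equivalence (b) follows. Substituting (a) and (b) into Corollary \ref{coro3} proves both directions of the theorem.

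The main obstacle I anticipate is the routine but somewhat delicate bookkeeping that the ideals $P[\Gamma]$, $P[G]$, $D[S]$, $K[S]$ contract and extend as claimed along the localization chains $D[\Gamma] \to D[G] \to D_P[G]$ and $D[\Gamma] \to K[\Gamma] \to K[\Gamma_S]$, together with the compatibility of integral (root) closure with these localizations; and, more structurally, the verification that $\Gamma_S$ is primary, which is the one point where the weakly Krull hypothesis on $\Gamma$ --- rather than mere finite $t$-character --- is genuinely used.
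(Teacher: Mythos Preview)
Your proposal is correct and follows essentially the same route as the paper: both directions pass through Corollary~\ref{coro3} together with the height chains $\mathrm{ht}(P[\Gamma]) = \mathrm{ht}(P[G]) = \mathrm{ht}(PD_P[G])$ and $\mathrm{ht}(D[S]) = \mathrm{ht}(K[S]) = \mathrm{ht}(K[S\Gamma_S])$, after which Lemmas~\ref{lemma3} and~\ref{lemma2} convert the height-one conditions into the UMT property. The paper routes the converse through Proposition~\ref{prop3} and Lemma~\ref{lemma1}(2) rather than back through Corollary~\ref{coro3}, but since the latter is derived from the former this is only a cosmetic difference.
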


\begin{proof}
$(\Rightarrow)$ By Proposition \ref{prop3},
$D[\Gamma]_{N(H)}$ is a one-dimensional weakly Krull domain and $K[G]$ is a weakly Krull domain.
Also, by Corollary \ref{coro3}, $D$ and $\Gamma$ are weakly Krull. Now, if $P \in t$-Max$(D)$,
then $1 =$ ht$(P) =$ ht$(P[\Gamma]) =$ ht$(P[G]) =$ ht$(PD_P[G])$
(see Corollary \ref{coro3} for the second equality), thus $\bar{D}_P$ is a Pr\"ufer domain
by Lemma \ref{lemma3}. Thus, $D$ is a UMT-domain.
Next, if $S \in t$-Max$(\Gamma)$, then ht$(K[S+\Gamma_S])=$ ht$(K[S]) =$ ht$(S)=1$ by Corollary \ref{coro3},
whence $\bar{\Gamma}_S$ is a valuation monoid by Lemma \ref{lemma2}. Thus, $\Gamma$ is a UMT-monoid.

$(\Leftarrow)$ Note that ht$(P[\Gamma]) =$ ht$(P[G]) =$ ht$(PD_P[G]) =1$ for all $P \in t$-Max$(D)$
by Lemma \ref{lemma3}
and ht$(D[S]) =$ ht$(K[S]) =$ ht$(K[S+\Gamma_S]) =1$ for all $S \in t$-Max$(\Gamma)$ by Lemma \ref{lemma2}.
Recall that
$$\text{Max}(D[\Gamma]_{N(H)}) = \{P[\Gamma]_{N(H)} \mid P \in t\text{-Max}(D)\} \cup
\{D[S]_{N(H)} \mid S \in t\text{-Max}(\Gamma)\}.$$ Thus, $D[\Gamma]_{N(H)}$ is a
one-dimensional weakly Krull domain. Therefore, $D[\Gamma]$ is a weakly Krull domain by Proposition \ref{prop3}.
\end{proof}

The following lemma is from \cite[Corollaries 3.2 and 4.3]{co19}.

\begin{lemma} \label{lemma8}
Let $D$ be an integral domain with quotient field $K$ and char$(D)=0$ (resp., char$(D) = p >0$). Then
$K[G]$ is a weakly Krull domain if and only if $G$ is of type $(0,0,0, \dots )$
(resp., of type $(0,0,0, \dots )$ except $p$).
\end{lemma}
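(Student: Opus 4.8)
\emph{Sketch of a proof.}
We may assume $G \neq \{0\}$, the case $G = \{0\}$ being trivial. The plan is to combine two external inputs: \cite[Lemma 2.1]{amz92}, by which $K[G]$ is weakly Krull if and only if $t$-$\dim(K[G]) = 1$ and $K[G]$ is of finite $t$-character, and \cite[Theorem 1]{c81}, by which $K[G]$ is a Krull domain exactly when $G$ is of type $(0,0,0,\dots)$. Since Krull domains are weakly Krull, the implication ``$G$ of type $(0,0,0,\dots)$ $\Rightarrow$ $K[G]$ weakly Krull'' is then immediate in every characteristic; in particular this is the entire ``if'' part when $\mathrm{char}(K)=0$. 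I would next reduce the remaining work to finite $t$-character by showing that $t$-$\dim(K[G]) = 1$ for every nonzero torsion-free abelian $G$: writing $K[G] = \bigcup_\lambda K[G_\lambda]$ as the directed union over its finitely generated subgroups, each $K[G_\lambda]$ is a Laurent polynomial ring over $K$, hence a UFD and so of $t$-dimension at most one, and $K[G]$ is free — hence faithfully flat — over each $K[G_\lambda]$, so heights of contractions drop and contraction is compatible with the $v$-operation on finitely generated ideals; a chain of prime $t$-ideals of length two in $K[G]$ would then descend to such a chain in some $K[G_\lambda]$, which is impossible. Thus ``weakly Krull'' amounts to ``each nonzero nonunit of $K[G]$ lies in only finitely many height-one primes''.

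For the ``only if'' part I would argue contrapositively: if $G$ fails to be of type $(0,0,0,\dots)$ (resp.\ of type $(0,0,0,\dots)$ except $p$), one produces a nonzero $g \in G$ that is so divisible in $G$ that, passing to larger and larger rank-one subgroups $K[\langle g/m\rangle]$ in which $g$ acquires further roots, the single element $X^g - 1$ factors into unboundedly many pairwise coprime cyclotomic polynomials $\Phi_d(X^{g/m})$. Each coprime factor lies in a distinct height-one prime of $K[\langle g/m\rangle]$, each of those extends — by faithful flatness and going-down — to a height-one prime of $K[G]$, and the resulting inverse system of nonempty finite sets of such primes, being of unbounded size, has infinite limit; so $X^g - 1$ lies in infinitely many height-one primes and finite $t$-character fails. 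It is essential that in characteristic $p$ only divisibility by primes $q \neq p$ is exploited, since $\Phi_p(T) = (T-1)^{p-1}$ over a field of characteristic $p$, so $p$-power divisibility only inflates multiplicities and creates no new primes — which is precisely why unbounded $p$-divisibility is permitted by ``type $(0,0,0,\dots)$ except $p$''.

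For the ``if'' part in characteristic $p$, I would fix a nonzero nonunit $f \in K[G]$, take a finitely generated subgroup $G_0$ containing its support, and use the two defining conditions of ``type $(0,0,0,\dots)$ except $p$'' to enlarge $G_0$ to a finite-rank subgroup $G_0 \subseteq \widehat G \subseteq G$ capturing all divisibility of elements of $G_0$ inside $G$ that can produce new height-one primes, so that the further divisibility of elements of $\widehat G$ occurring in $G$ is only by $p$. Then $K[\widehat G]$ is, up to purely inseparable phenomena harmless in characteristic $p$, a UFD, so it has only finitely many height-one primes through $f$; and because the torsion of $G/\widehat G$ is concentrated at finitely many primes with bounded exponents away from $p$, while $p$-torsion contributes only nilpotents to residue-field group algebras in characteristic $p$, each such prime is dominated by only finitely many height-one primes of $K[G]$, and together they account for all height-one primes of $K[G]$ containing $f$. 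With $t$-$\dim(K[G]) = 1$ this gives the weakly Krull property.

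The step I expect to be genuinely hard is the finite-$t$-character bookkeeping. On the ``only if'' side one must verify rigorously that the cyclotomic factors $\Phi_d(X^{g/m})$, scattered across different subgroup rings, really do assemble into \emph{distinct} height-one primes of $K[G]$ — two of them lying in a common height-one prime would, after contracting to a suitable finitely generated subring, contradict coprimality there. On the ``if'' side the delicate point is controlling the fibre of $\mathrm{Spec}\,K[G] \to \mathrm{Spec}\,K[\widehat G]$ over a height-one prime containing $f$; this is where the precise content of the conditions on $G$ is consumed, and where the dichotomy between $p$-power divisibility and divisibility by other primes is what makes the statement come out exactly as claimed.
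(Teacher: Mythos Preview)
The paper does not prove this lemma at all: it merely records that the statement is \cite[Corollaries 3.2 and 4.3]{co19} and moves on. So there is no in-paper argument to compare yours against; you are effectively reconstructing the content of \cite{co19} from scratch. Your overall architecture --- establish $t\text{-}\dim(K[G])=1$ once and for all, then reduce weak Krullness to finite $t$-character, then analyse finite $t$-character via cyclotomic factorisations of $X^g-1$ --- is sensible and is in the spirit of how \cite{co19} proceeds.

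That said, your sketch has a genuine gap in the $t$-dimension step. You argue that a length-two chain of prime $t$-ideals in $K[G]$ contracts to ``such a chain'' in some $K[G_\lambda]$, which you say is impossible. But $K[G_\lambda]$ is a Laurent polynomial ring, typically of Krull dimension $>1$, so chains of primes of length two certainly exist there; what is impossible is a chain of prime \emph{$t$-ideals} (height-one primes) of length two. For your descent to bite you would need the contractions to remain $t$-ideals, and your justification (``contraction is compatible with the $v$-operation on finitely generated ideals'') does not give this: compatibility of $J\mapsto J_v$ with contraction for finitely generated $J$ says nothing about the $t$-closure of the contracted prime, which is a union over \emph{all} its finitely generated subideals. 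Faithful flatness gives going-down, hence $\mathrm{ht}(P\cap K[G_\lambda])\le \mathrm{ht}(P)$, but that inequality is exactly what you are trying to prove, so the argument is circular as written. One can rescue the conclusion by exploiting that $K[G]$ is a GCD-domain (so $(f,g)_v$ is principal) together with a more careful factor-tracking argument in the tower of UFDs, but this needs to be spelled out.

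The ``if'' direction in characteristic $p$ is also underspecified. The subgroup $\widehat G$ you describe is not pinned down, the assertion that $K[\widehat G]$ is ``up to purely inseparable phenomena a UFD'' is vague (for $\widehat G$ of rank $>1$ this is where the real work lies), and the finiteness of the fibres of $X^1(K[G])\to X^1(K[\widehat G])$ over primes containing $f$ is asserted rather than argued. You correctly identify at the end that this fibre analysis is the crux; in \cite{co19} this is handled with some care, and your sketch does not yet supply a substitute. The ``only if'' cyclotomic argument is on firmer ground, modulo the distinctness check you already flag.
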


By Theorem \ref{theorem7} and Lemma \ref{lemma8}, we have the following two corollaries
which are complete characterizations of semigroup rings $D[\Gamma]$ that are weakly Krull domains.

\begin{corollary} \label{coro8}
Let $D$ be an integral domain, $\Gamma$ be a torsion-free monoid with quotient group $G$, and assume char$(D)=0$.
Then $D[\Gamma]$ is a weakly Krull domain if and only if $D$ is a weakly Krull UMT-domain,
$\Gamma$ is a weakly Krull UMT-monoid, and $G$ is of type $(0,0,0, \dots )$.
\end{corollary}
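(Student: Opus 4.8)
The plan is to obtain this statement as a purely formal consequence of Theorem \ref{theorem7} and Lemma \ref{lemma8}, with no new work required. First I would invoke Theorem \ref{theorem7}: it asserts that $D[\Gamma]$ is a weakly Krull domain if and only if three conditions hold simultaneously, namely that $D$ is a weakly Krull UMT-domain, that $\Gamma$ is a weakly Krull UMT-monoid, and that $K[G]$ is a weakly Krull domain (where $K$ is the quotient field of $D$). Two of these three conditions are already exactly the conditions appearing in the statement of the corollary, so the entire content of the proof is the replacement of the remaining condition ``$K[G]$ is a weakly Krull domain'' by the group-theoretic condition ``$G$ is of type $(0,0,0,\dots)$''.

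For that replacement I would apply Lemma \ref{lemma8}. Since char$(D)=0$ by hypothesis, the first alternative of Lemma \ref{lemma8} applies verbatim, and it yields that $K[G]$ is a weakly Krull domain if and only if $G$ is of type $(0,0,0,\dots)$. Substituting this equivalence into the characterization provided by Theorem \ref{theorem7} gives precisely the asserted equivalence, completing the proof.

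Since the argument is nothing more than the chaining of two already-established equivalences, I do not expect any genuine obstacle; the only thing to keep track of is that Lemma \ref{lemma8} is phrased in terms of $\mathrm{char}(D)$ (not in terms of $\mathrm{char}(K)$), so the hypothesis $\mathrm{char}(D)=0$ can be used directly without any intermediate remark about the quotient field.
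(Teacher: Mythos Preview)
Your proposal is correct and matches the paper's approach exactly: the paper simply states that Corollaries \ref{coro8} and \ref{coro9} follow immediately from Theorem \ref{theorem7} and Lemma \ref{lemma8}, with no additional argument.
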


\begin{corollary} \label{coro9}
Let $D$ be an integral domain, $\Gamma$ be a torsion-free monoid with quotient group $G$, and assume char$(D)=p > 0$.
Then $D[\Gamma]$ is a weakly Krull domain if and only if $D$ is a weakly Krull UMT-domain,
$\Gamma$ is a weakly Krull UMT-monoid, and $G$ is of type $(0,0,0, \dots )$ except $p$.
\end{corollary}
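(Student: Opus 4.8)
The plan is to deduce this statement immediately from Theorem \ref{theorem7} together with Lemma \ref{lemma8}, so that essentially no new work is required. Theorem \ref{theorem7} already reduces the weakly Krull property of $D[\Gamma]$ to three conditions: that $D$ is a weakly Krull UMT-domain, that $\Gamma$ is a weakly Krull UMT-monoid, and that the group ring $K[G]$ over the quotient field $K$ of $D$ is a weakly Krull domain. Thus the only thing left to do is to replace the third condition, which mentions the field $K$, by the purely group-theoretic condition that $G$ be of type $(0,0,0,\dots)$ except $p$.

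First I would note that since char$(D)=p>0$ and $D$ embeds into its quotient field $K$, we also have char$(K)=p$. Lemma \ref{lemma8} is phrased for an integral domain, but it refers only to that domain's quotient field; applying it with $K$ itself in the role of the domain (so that the quotient field is again $K$), it yields that $K[G]$ is a weakly Krull domain if and only if $G$ is of type $(0,0,0,\dots)$ except $p$.

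It then remains only to chain the equivalences. By Theorem \ref{theorem7}, $D[\Gamma]$ is a weakly Krull domain if and only if $D$ is a weakly Krull UMT-domain, $\Gamma$ is a weakly Krull UMT-monoid, and $K[G]$ is a weakly Krull domain; by the previous paragraph the last of these three conditions is equivalent to $G$ being of type $(0,0,0,\dots)$ except $p$. Substituting gives exactly the asserted characterization, and Corollary \ref{coro8} follows by the identical argument using instead the characteristic-zero half of Lemma \ref{lemma8}.

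Since every ingredient is already available, I do not expect any real obstacle. The only point that deserves a moment's care is the bookkeeping about characteristics: one must check that it is legitimate to feed char$(D)=p$ into Lemma \ref{lemma8} via the field $K$, and that the split between the two corollaries matches the two cases of that lemma. One could additionally remark, as indicated in the introduction, that specializing to generalized Krull domains recovers Matsuda's theorems \cite{m77, m82}, but that is a comment on the result rather than a step in its proof.
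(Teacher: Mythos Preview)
Your proposal is correct and matches the paper's approach exactly: the paper derives both Corollary~\ref{coro8} and Corollary~\ref{coro9} directly from Theorem~\ref{theorem7} together with Lemma~\ref{lemma8}, without any additional argument. Your remark that $\mathrm{char}(K)=\mathrm{char}(D)=p$ is the only small point needed to apply Lemma~\ref{lemma8}, and this is unproblematic.
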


Let $\mathbb{N}_0$ be the additive monoid of nonnegative integers under the
usual addition. Then $\mathbb{N}_0$ is a torsion-free monoid with quotient group $\mathbb{Z}$.
A numerical monoid $\Gamma$ is a submonoid of $\mathbb{N}_0$ such that $0 \in \Gamma$ and $\mathbb{N} \setminus \Gamma$ is finite. Hence,
$\Gamma$ is a torsion-free monoid with quotient group $\mathbb{Z}$.

\begin{corollary}\label{coro10} \cite[Theorem 1.3]{l12}
Let $D$ be an integral domain and $\Gamma$ be a numerical monoid with
$\Gamma \subseteq \mathbb{N}_0$. Then $D[\Gamma]$ is a weakly Krull domain
if and only if $D$ is a weakly Krull UMT-domain.
\end{corollary}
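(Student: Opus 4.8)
The plan is to derive the statement from Theorem~\ref{theorem7}. Since $\Gamma$ is a numerical monoid, its quotient group is $G = \Z$, and $\Z$ trivially satisfies the ascending chain condition on its (necessarily cyclic) subgroups: a chain $n_1\Z \subseteq n_2\Z \subseteq \cdots$ forces $n_{i+1} \mid n_i$ and hence stabilizes. Thus $\Z$ is of type $(0,0,0,\dots)$, and therefore also of type $(0,0,0,\dots)$ except $p$ for every prime $p$. By Lemma~\ref{lemma8} (applied in either characteristic), or simply because $K[\Z] = K[X,X^{-1}]$ is a localization of the UFD $K[X]$ and hence a UFD, the ring $K[G]$ is a weakly Krull domain. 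So Theorem~\ref{theorem7} reduces the corollary to the assertion that \emph{every numerical monoid is a weakly Krull UMT-monoid}; granting this, $D[\Gamma]$ is a weakly Krull domain if and only if $D$ is a weakly Krull UMT-domain.

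It remains to verify the two monoid-theoretic properties. First, $\Gamma$ is primary: its set of non-units is $\mathfrak m := \Gamma\setminus\{0\}$, and if $\mathfrak p \neq \emptyset$ is a prime ideal of $\Gamma$, choose $a \in \mathfrak p$ with $a > 0$; then for every $b \in \mathfrak m$ one has $ab = a + a(b-1) \in a + \Gamma \subseteq \mathfrak p$, while $ab$ is also a sum of $a$ copies of $b$, so primeness of $\mathfrak p$ gives $b \in \mathfrak p$, whence $\mathfrak p = \mathfrak m$. Hence $\mathfrak m$ is the unique nonempty prime ideal of $\Gamma$, so $\mathfrak m = \mathfrak m_t$ is the unique maximal $t$-ideal and $t$-dim$(\Gamma) = 1$; as there is only one maximal $t$-ideal, $\Gamma$ is of finite $t$-character, hence a weakly Krull monoid. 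Second, the integral closure is $\bar{\Gamma} = \{x \in \Z \mid nx \in \Gamma \text{ for some } n \geq 1\} = \mathbb{N}_0$: every nonnegative integer has a sufficiently large multiple lying in $\Gamma$ (because $\mathbb{N}_0 \setminus \Gamma$ is finite), while no negative integer can since $\Gamma \subseteq \mathbb{N}_0$. Now $\mathbb{N}_0$ is a valuation monoid, and every monoid localization of $\mathbb{N}_0$ is either $\mathbb{N}_0$ or $\Z$, hence again a valuation monoid; therefore $\bar{\Gamma}_S$ is a valuation monoid for the (unique) maximal $t$-ideal $S = \mathfrak m$ of $\Gamma$, i.e. $\Gamma$ is a UMT-monoid. (Alternatively one may invoke the discussion following Proposition~\ref{prop6}: $\Gamma$ has $t$-dimension one and its integral closure $\mathbb{N}_0$ is a P$v$MM, so $\Gamma$ is a UMT-monoid.) Combining this with Theorem~\ref{theorem7} finishes the proof.

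There is no genuine obstacle: the argument is an assembly of Theorem~\ref{theorem7} with standard facts about numerical monoids. The only steps needing a line of care are confirming that $\Gamma$ is primary and the identification $\bar{\Gamma} = \mathbb{N}_0$; both are routine, and neither the characteristic of $D$ nor whether $D$ is a field enters, because $K[\Z]$ is weakly Krull in all cases.
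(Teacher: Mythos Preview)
Your proof is correct and follows essentially the same approach as the paper: both reduce to Theorem~\ref{theorem7} by observing that $K[\Z]$ is a UFD (hence weakly Krull), that $\Gamma\setminus\{0\}$ is the unique nonempty prime ideal of $\Gamma$, and that the integral closure $\bar{\Gamma}=\mathbb{N}_0$ is a valuation monoid, so $\Gamma$ is a weakly Krull UMT-monoid. Your version simply spells out the details that the paper leaves as ``clearly''.
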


\begin{proof}
Clearly, $\mathbb{N}_0$ is the integral closure of $\Gamma$ in $\mathbb{Z}$
and $\mathbb{N}_0$ is a valuation monoid. Moreover, $\Gamma \setminus \{0\}$ is the unique
nonempty prime ideal of $\Gamma$, so $\Gamma$ is a weakly Krull UMT-monoid.
Note that if $K$ is the quotient field of $D$, then
$K[\mathbb{Z}]$ is a UFD, and hence a weakly Krull domain.
Thus, the proof is completed by Theorem \ref{theorem7}.
\end{proof}

A generalized Krull domain $D$ is a weakly Krull domain such that $D_P$ is a valuation domain
for all $P \in t$-Max$(D)$. The next result was proved by Matsuda (\cite[Proposition 10.7]{m77} for the case of
char$(D)=0$ and \cite[Theorems 1.5 and 4.3]{m82} for the case of char$(D)=p>0$).

\begin{corollary}
Let $D$ be an integral domain, $\Gamma$ be a torsion-free monoid with quotient group $G$, and assume char$(D)=0$ (resp., char$(D) = p >0$).
Then $D[\Gamma]$ is a generalized Krull domain if and only if $D$ is a generalized Krull domain,
$\Gamma$ is a generalized Krull monoid, and $G$ is of type $(0,0,0, \dots )$
(resp., type $(0,0,0, \dots )$ except $p$).
\end{corollary}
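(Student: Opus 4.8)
The plan is to deduce this from Theorem~\ref{theorem7} and Lemma~\ref{lemma8} by exploiting the fact that a generalized Krull domain is exactly a weakly Krull domain that is a P$v$MD, and that a generalized Krull monoid is exactly a weakly Krull monoid that is a P$v$MM. Indeed, by \cite[Theorem~5]{g67} a domain is a P$v$MD precisely when all its localizations at maximal $t$-ideals are valuation domains, and by \cite[Theorem~17.2]{h98} a monoid is a P$v$MM precisely when all its localizations at maximal $t$-ideals are valuation monoids. Combining this with \cite[Proposition~3.2]{hz89} (P$v$MD $=$ integrally closed UMT-domain) and Proposition~\ref{prop6} (for a torsion-free integrally closed monoid, UMT-monoid $=$ P$v$MM), the UMT-hypotheses of Theorem~\ref{theorem7} become automatic once integral closedness is known, and conversely the P$v$MD/P$v$MM conclusions come for free.

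For the forward direction, assume $D[\Gamma]$ is a generalized Krull domain. Then $D[\Gamma]$ is weakly Krull, so Theorem~\ref{theorem7} and Lemma~\ref{lemma8} give that $D$ is a weakly Krull UMT-domain, $\Gamma$ is a weakly Krull UMT-monoid, and $G$ is of type $(0,0,0,\dots)$ (resp.\ type $(0,0,0,\dots)$ except $p$). Moreover $D[\Gamma]$ is a P$v$MD, hence integrally closed, and therefore $D=\bar D$ and $\Gamma=\bar\Gamma$ (the standard transfer of integral closedness for torsion-free $\Gamma$, see \cite[Corollary~12.11]{g84}). Thus $D$ is an integrally closed UMT-domain, i.e.\ a P$v$MD, so $D_P$ is a valuation domain for all $P\in t\text{-Max}(D)$ and $D$ is a generalized Krull domain; likewise $\Gamma$ is an integrally closed torsion-free UMT-monoid, hence a P$v$MM by Proposition~\ref{prop6}, so $\Gamma_S$ is a valuation monoid for all $S\in t\text{-Max}(\Gamma)$ and $\Gamma$ is a generalized Krull monoid.

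Conversely, assume $D$ is a generalized Krull domain, $\Gamma$ a generalized Krull monoid, and $G$ of the stated type. Being a weakly Krull P$v$MD, $D$ is integrally closed, so $\bar D_P=D_P$ is a valuation domain for each $P\in t\text{-Max}(D)$ and $D$ is a weakly Krull UMT-domain; symmetrically $\Gamma$ is a weakly Krull UMT-monoid with $\bar\Gamma_S=\Gamma_S$ a valuation monoid for each $S\in t\text{-Max}(\Gamma)$. By Lemma~\ref{lemma8}, $K[G]$ is a weakly Krull domain, so $D[\Gamma]$ is weakly Krull by Theorem~\ref{theorem7}. To upgrade this to generalized Krull, note that $D[\Gamma]$ is integrally closed because $D$ and $\Gamma$ are \cite[Corollary~12.11]{g84}, and that $D[\Gamma]$ is a UMT-domain by \cite[Theorem~17]{cs18} (using that $D$ is a UMT-domain and each $\bar\Gamma_S=\Gamma_S$ is a valuation monoid); hence $D[\Gamma]$ is a P$v$MD by \cite[Proposition~3.2]{hz89}, so $D[\Gamma]_Q$ is a valuation domain for all $Q\in t\text{-Max}(D[\Gamma])$, and $D[\Gamma]$ is a generalized Krull domain.

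The main point to handle carefully is the interface between ``generalized Krull'' and ``weakly Krull plus P$v$MD/P$v$MM'', in particular checking that integral closedness is transmitted in both directions between $D[\Gamma]$ and the pair $(D,\Gamma)$ for torsion-free $\Gamma$; everything else is a straightforward assembly of Theorem~\ref{theorem7}, Proposition~\ref{prop6}, Lemma~\ref{lemma8}, and the cited characterizations of P$v$MDs and P$v$MMs.
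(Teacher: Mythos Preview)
Your argument is correct, but it follows a somewhat different path from the paper's own proof. The paper argues prime-by-prime: it records the two equivalences ``$D_P$ is a valuation domain $\Leftrightarrow$ $D[\Gamma]_{P[\Gamma]}$ is a valuation domain'' and ``$\Gamma_S$ is a valuation monoid $\Leftrightarrow$ $D[\Gamma]_{D[S]}$ is a valuation domain'', and then says the corollary follows from Corollaries~\ref{coro8} and~\ref{coro9}. You instead package the valuation-at-every-maximal-$t$-ideal condition into the single statement ``generalized Krull $=$ weakly Krull $+$ P$v$MD (resp.\ P$v$MM)'' and then transport the P$v$MD/P$v$MM property between $D[\Gamma]$ and the pair $(D,\Gamma)$ via integral closedness and the UMT machinery.

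Your route has the minor advantage that it transparently covers \emph{all} maximal $t$-ideals of $D[\Gamma]$ at once, including those of the third type $Q$ with $QK[G]\subsetneq K[G]$ in Lemma~\ref{lemma1}(1), which the paper's terse proof does not mention explicitly (they are handled because $K[G]$ is automatically a P$v$MD). On the other hand, the paper's argument is lighter: it never needs \cite[Theorem~17]{cs18} or the transfer of integral closedness. Note also that your backward direction can be shortened: instead of going through integral closedness of $D[\Gamma]$ and \cite[Theorem~17]{cs18} to conclude that $D[\Gamma]$ is a P$v$MD, you can invoke \cite[Proposition~6.5]{aa82} directly (as the paper does in the proof of Corollary~\ref{wfd}), which says $D[\Gamma]$ is a P$v$MD if and only if $D$ is a P$v$MD and $\Gamma$ is a P$v$MM.
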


\begin{proof}
Let $P$ be a prime ideal of $D$. Then $D_P$ is a valuation domain if and only if
$D[\Gamma]_{P[\Gamma]}$ is a valuation domain. Also, if $S$ is a prime ideal of $\Gamma$,
then $\Gamma_S$ is a valuation monoid if and only if
$D[\Gamma]_{D[S]}$ is a valuation domain. Thus, the result follows directly from Corollaries \ref{coro8} and \ref{coro9}.
\end{proof}

It is well known that a domain $D$ is a WFD if and only if it is weakly Krull with $Cl_t(D) = \{0\}$ 
\cite[Theorem]{az90}.  
We next use the main result of this section to recover Chang and Oh's result \cite{co19}
which completely characterizes
when $D[\Gamma]$ is a WFD.

\begin{corollary} \label{wfd}
\cite[Theorems 3.4 and 4.5]{co19}
Let $D$ be an integral domain, $\Gamma$ be a torsion-free monoid with quotient group $G$, and assume char$(D)=0$
(resp., char$(D) = p >0$).
Then $D[\Gamma]$ is a WFD if and only if $D$ is a weakly factorial GCD-domain,
$\Gamma$ is a weakly factorial GCD-monoid, and $G$ is of type $(0,0,0, \dots )$
(resp., $(0,0,0, \dots )$ except $p$).
\end{corollary}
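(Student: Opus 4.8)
The plan is to combine Theorem~\ref{theorem7} with the criterion recalled just above the statement, that a domain is a WFD precisely when it is weakly Krull and every $t$-invertible $t$-ideal is principal, i.e.\ weakly Krull with $Cl_t=\{0\}$ (\cite[Theorem]{az90}). I will use freely the following facts: an integrally closed UMT-domain is a P$v$MD (\cite[Proposition 3.2]{hz89}), and dually, via Proposition~\ref{prop6}, a root-closed UMT-monoid is a P$v$MM; a P$v$MD (resp.\ P$v$MM) with trivial $t$-class group is a GCD-domain (resp.\ GCD-monoid), since there every finitely generated ideal is $t$-principal; a GCD-domain is integrally closed and has trivial $t$-class group, so ``weakly factorial GCD-domain'' is synonymous with ``weakly Krull GCD-domain'' (and likewise for monoids); the classical Gilmer--Parker theorem (see \cite{g84}) that $D[\Gamma]$ is a GCD-domain if and only if $D$ is a GCD-domain and $\Gamma$ is a GCD-monoid; and the description $\overline{D[\Gamma]}=\bar D[\bar\Gamma]$ of the integral closure of a semigroup ring (\cite{g84}). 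Since Lemma~\ref{lemma8} identifies ``$K[G]$ is a weakly Krull domain'' with the asserted type condition on $G$, it is enough to prove: $D[\Gamma]$ is a WFD if and only if $D$ is a weakly Krull GCD-domain, $\Gamma$ is a weakly Krull GCD-monoid, and $K[G]$ is a weakly Krull domain.

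$(\Leftarrow)$ If $D$ and $\Gamma$ are as stated then, being GCD, $D$ is a P$v$MD and $\Gamma$ a P$v$MM, hence $D$ is an integrally closed UMT-domain and $\Gamma$ a UMT-monoid; thus $D$ is a weakly Krull UMT-domain, $\Gamma$ a weakly Krull UMT-monoid, and $K[G]$ is weakly Krull, whence $D[\Gamma]$ is weakly Krull by Theorem~\ref{theorem7}. By Gilmer--Parker $D[\Gamma]$ is a GCD-domain, so $Cl_t(D[\Gamma])=\{0\}$; therefore $D[\Gamma]$ is a WFD.

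$(\Rightarrow)$ Suppose $D[\Gamma]$ is a WFD. Then it is weakly Krull, so Theorem~\ref{theorem7} yields that $D$ is a weakly Krull UMT-domain, $\Gamma$ a weakly Krull UMT-monoid, and $K[G]$ a weakly Krull domain; in particular $D[\Gamma]$ is itself a UMT-domain by \cite[Theorem 17]{cs18}, and $Cl_t(D[\Gamma])=\{0\}$. The point now is that $D$ is integrally closed and $\Gamma$ is root-closed; granting this, $\overline{D[\Gamma]}=\bar D[\bar\Gamma]=D[\Gamma]$, so $D[\Gamma]$ is an integrally closed UMT-domain, hence a P$v$MD, and together with $Cl_t(D[\Gamma])=\{0\}$ it is a GCD-domain; Gilmer--Parker then forces $D$ to be a GCD-domain and $\Gamma$ a GCD-monoid, and since both are already weakly Krull, $D$ is a weakly Krull GCD-domain and $\Gamma$ a weakly Krull GCD-monoid, as needed. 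To see that $D$ is integrally closed: if not, pick $P\in t\text{-Max}(D)$ with $\bar D_P\neq D_P$ and $c\in\bar D_P\setminus D_P$, and adapt the argument of Lemmas~\ref{lemma2} and \ref{lemma3} (which detect, through $K[G]$, non-comparability of two elements, resp.\ non-invertibility of a two-generated ideal) to build from $c$ an upper-to-zero-type prime $Q$ of $D[\Gamma]$ contained in $P[\Gamma]$ — located by means of the description of $\mathrm{Max}(D[\Gamma]_{N(H)})$ in Lemma~\ref{lemma1}(2). One checks that $Q$ is a $t$-invertible $t$-ideal — after inverting $N(H)$ and localizing at the height-one prime $Q$ the ring becomes a DVR — but is not principal, a generator being ruled out by a content/degree computation which would otherwise force $c\in D_P$; this contradicts $Cl_t(D[\Gamma])=\{0\}$. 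That $\Gamma$ is root-closed is the symmetric statement, with $D$ and $t\text{-Max}(D)$ replaced by $\Gamma$ and $t\text{-Max}(\Gamma)$.

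The main obstacle is precisely this last step: reading off integral closedness of $D$ and root-closedness of $\Gamma$ from the triviality of $Cl_t(D[\Gamma])$. Carrying it out requires a careful construction of the relevant upper-to-zero from a pair of ``non-comparable'' data in the appropriate localization, a verification of its $t$-invertibility (most cleanly after inverting $N(H)$ and localizing at the height-one prime it determines, as in the proofs of Lemmas~\ref{lemma2} and \ref{lemma3}), and a non-principality argument of content/degree type, all while tracking via Lemma~\ref{lemma1} which factor's integral closure is being violated. Were a workable description of $Cl_t(D[\Gamma])$ in terms of $Cl_t(D)$, $Cl_t(\Gamma)$ and the ``upper'' contributions available in this generality, the step would shorten considerably; in its absence it is where the real work lies.
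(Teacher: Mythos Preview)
Your $(\Leftarrow)$ direction is fine and close in spirit to the paper's, though you route through the Gilmer--Parker GCD characterization of $D[\Gamma]$ while the paper uses the class group formula $Cl_t(D[\Gamma]) \cong Cl_t(D)\times Cl_t(\Gamma)$ from \cite[Lemma 2.1 and Corollary 2.11]{eik02} (valid when $D$ and $\Gamma$ are integrally closed) together with the P$v$MD/P$v$MM and GCD characterizations; both work.

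The $(\Rightarrow)$ direction, however, has a genuine gap. You correctly isolate the crux --- deducing that $D$ is integrally closed and $\Gamma$ root-closed from $Cl_t(D[\Gamma])=\{0\}$ --- but what you write is only a program, not a proof. The phrase ``adapt the argument of Lemmas~\ref{lemma2} and \ref{lemma3}'' is doing all the work, yet those lemmas compute heights of extended primes and do not, as stated, produce a $t$-invertible non-principal $t$-ideal; your ``upper-to-zero-type prime $Q$ contained in $P[\Gamma]$'' is never constructed, its $t$-invertibility is asserted (``the ring becomes a DVR'') without checking that the relevant localization is discrete rank one, and the non-principality ``content/degree computation'' is left entirely to the imagination. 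You yourself flag this as ``where the real work lies''. As written, the argument is incomplete.

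The paper sidesteps all of this by invoking a result you apparently did not know: \cite[Theorems 2.6 and 2.7]{eik02} prove directly that if $Cl_t(D[\Gamma])=\{0\}$ then $D$ and $\Gamma$ are integrally closed (and $Cl_t(D)=Cl_t(\Gamma)=\{0\}$). This is exactly the ``workable description of $Cl_t(D[\Gamma])$'' you wished for in your final paragraph. With that citation in hand, the rest of your $(\Rightarrow)$ argument (P$v$MD $+$ trivial $Cl_t$ $\Rightarrow$ GCD, then split via Gilmer--Parker or via \cite[Proposition 6.5]{aa82}) goes through immediately.
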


\begin{proof}
This follows directly from Proposition \ref{prop6}, Corollaries \ref{coro8}, \ref{coro9}, and the following
observations: (i) If $D$ and $\Gamma$ are integrally closed, then $Cl_t(D[\Gamma]) = Cl_t(D) \times Cl_t(\Gamma)$
\cite[Lemma 2.1 and Corollary 2.11]{eik02}, (ii) if $Cl_t(D[\Gamma]) = \{0\}$,
then $D$ and $\Gamma$ are integrally closed and $Cl_t(D) = Cl_t(\Gamma) = \{0\}$ by \cite[Theorems 2.6 and 2.7]{eik02},
(iii) $D[\Gamma]$ is a P$v$MD if and only if $D$ is a P$v$MD and $\Gamma$ is a P$v$MM \cite[Proposition 6.5]{aa82},
(iv) $D$ is a P$v$MD if and only if $D$ is an integrally closed UMT-domain \cite[Proposition 3.2]{hz89},
(v) $D$ is a GCD-domain if and only if $D$ is a P$v$MD with $Cl_t(D) = \{0\}$ \cite[Proposition 2]{b82},
(vi) $\Gamma$ is a GCD-monoid if and only if $\Gamma$ is a P$v$MM with $Cl_t(\Gamma) = \{0\}$
\cite[Theorem 11.5]{h98}, and (vii)  $\Gamma$ is a weakly factorial monoid if and only if
$\Gamma$ is a weakly Krull monoid with $Cl_t(\Gamma)= \{0\}$ \cite[p. 258]{h98}.
\end{proof}

\section{Arithmetical applications of the main result}

Building on the algebraic results of the previous section and on a recent work on the distribution of prime divisors in the class groups of affine semigroup rings \cite{Affine}, we will study the factorization theory of weakly Krull semigroup rings. What is known up to now concerning factorizations in weakly Krull domains are on the one hand a few very general results lacking examples and on the other hand very concrete examples lacking generality. This is mainly due to the fact
 that the class group and the distribution of the prime divisors play a key role in the investigation of the factorization behaviour of a weakly Krull domain. But determining class groups and prime divisors in the classes is in general very hard. Nevertheless, the structure of sets of lengths of one-dimensional local Mori domains (equivalently local weakly Krull Mori domains) is given in \cite{local}.

We are the first who give a fairly broad but sufficiently concrete class of non-local weakly Krull domains
that are not Krull domains, namely certain affine semigroup rings, where we can understand the arithmetic.
For example, up to now the knowledge of domains having full system of sets of lengths was restricted to a class of certain Krull domains
(see \cite{Kainrath}) and a class of integer-valued polynomial rings (see \cite{Frisch1, Frisch2}).
 Our results show that there is also a class of weakly Krull domains, which are not Krull but have full system of sets of lengths.

We recall some concepts from factorization theory (for details see \cite[Chapter 1]{GHK}). Let $H$ be a monoid and let  $\mathcal A(H)$ be its set of atoms. We denote by $H_{red}=H/H^\times$ the associated reduced monoid.
Consider the free abelian monoid $\mathsf Z(H)=\mathcal F(\mathcal A(H_{red}))$ with the epimorphism $\pi:\mathsf Z(H)\to H_{red}$ via $\pi(uH^\times)=uH^\times$ for all $u\in \mathcal A(H)$.
For $a\in H$,
\begin{itemize}
\item $\mathsf Z(a)=\pi^{-1}(\{aH^\times\})$ is the \textit{set of factorizations} of $a$, and

\item  $\mathsf L(a)=\{|z|\colon z\in\mathsf Z(a)\}$ is  the \textit{set of lengths}  of $a$, where $|z|=m$ if $z=u_1\cdots u_m$ for $u_i\in \mathcal A(H_{red})$.
\end{itemize}
Then $H$ is said to be \textit{atomic} if $\mathsf L(a)$ is non-empty for all $a\in H$
and is said to be a BF-\textit{monoid} if  $H$ is atomic and $\mathsf L(a)$ is finite for all $a\in H$.
Note that if $R$ is a Mori domain, then the multiplicative monoid $R\setminus\{0\}$ is always a BF-monoid.
We consider the system
$\mathcal L(H)=\{\mathsf L(a)\colon a\in H\}$ of all sets of lengths of $H$. For convenience,
we denote $\mathcal L(R\setminus\{0\})$ by $\mathcal L(R)$ for an integral domain $R$. A system of sets of lengths of a BF-monoid is called \textit{full} if it equals $\{\{0\},\{1\}\}\cup \{L\subseteq \N_{\geq 2}\mid L\text{ finite non-empty}\}$.  The \textit{set of distances} of $H$ is $\Delta(H)=\bigcup_{L\in\mathcal L(H)}\Delta(L)$, where $\Delta(L)=\{d\in\N\colon \text{there is } l\in L \text{ such that } L\cap \{l,l+1,\ldots,l+d\}=\{l,l+d\}\}$. For  $k\in \N$, we define $\mathcal U_k(H)=\bigcup_{k\in L\in\mathcal L(H)} L$.

Let $R$ be a weakly Krull Mori domain with non-zero conductor $\mathfrak{f}_R = (R: \widehat{R}) \neq (0)$. 
Then $H = R \setminus \{0\}$ is a weakly Krull Mori monoid with non-empty conductor $\mathfrak{f}_H = (H:\widehat{H}) \neq \emptyset$. 
It follows by \cite[Proposition 2.4.5.1]{GHK} that the canonical map 
$H \to \mathcal{I}_v^*(H) \cong \coprod_{\mathfrak{p} \in X^1(H)} (H_\mathfrak{p})_{red} \cong \mathcal{F}(P) \times D_1 \times \ldots \times D_n$ 
is a divisor homomorphism, where $P = \{ \mathfrak{p} \in X^1(H) \mid \mathfrak{p} \not\supseteq \mathfrak{f}_H\}$, $\mathcal{F}(P)$ 
is the free abelian monoid with basis $P$, and $D_i = (H_{\mathfrak{q}_i})_{red}$ for $i \in \{1,\ldots,n\}$ 
and $X^1(H) \setminus P = \{\mathfrak{q}_1,\ldots, \mathfrak{q}_n\}$. The existence of the first isomorphism is proven in \cite[Proposition 5.3.4]{GKR} and the second isomorphism as well as the finiteness of $X^1(H) \setminus P$ follows from \cite[Theorem 2.6.5]{GHK}.
Let $G = \mathcal{C}_v(H) \cong \mathcal{C}_v(R)$ be the divisor-class group of $H$ 
and let $G_0$ be the set of all classes containing prime divisors, that is, prime ideals 
$\mathfrak{p} \in P$. Let $T = D_1 \times \cdots \times D_n$ (so $T$ is a reduced monoid, i.e. $T^\times$ is trivial) 
and let $\iota: T \to G$ be the canonical map induced by the isomorphisms from above and the projection $\mathcal{I}_v^*(H) \to G$.
The \textit{$T$-block monoid} over $G_0$ defined by $\iota$ is
\begin{align*}
B = \mathcal{B}(G_0,T,\iota) = \{(g_1 \cdots g_k,t) \in \mathcal{F}(G_0) \times T \mid g_1 + \ldots + g_k + \iota(t) = 0\}.
\end{align*}
Then the monoid $\mathcal{B}(G_0) = \{g_1\cdots g_k \in \mathcal{F}(G_0) \mid g_1+ \ldots + g_k = 0 \}$ is a divisor-closed submonoid of $B$. By \cite[Lemma 4.3]{GKR}, there exists a transfer homomorphism $\beta: H \to B$. Thus, $\mathcal{L}(R) = \mathcal{L}(H) = \mathcal{L}(B)$ by \cite[Proposition 3.2.3.5]{GHK}. Moreover, it follows that $\mathcal{L}(\mathcal{B}(G_0)) \subseteq \mathcal{L}(R)$ by the previous equality and $\mathcal{B}(G_0) \subseteq B$ being divisor-closed. It is easy to see that $\mathcal B(G_1)\subseteq \mathcal B(G_0)$ is a divisor closed submonoid for every subset $G_1\subseteq G_0$, whence $\mathcal L(\mathcal B(G_1))\subseteq \mathcal L(\mathcal B(G_0))$.

The notion of Hilbertian fields is a classical one whose origin lies in Galois theory and is to be found in \cite{Fried}. For our purpose, we need a generalization of it.

\begin{definition}
A field $K$ is called \textit{pseudo-Hilbertian} if, for all $n\in \N_0$ and for all $a_0,\ldots ,a_n\in K$ with $a_0\neq 0$, there exists an irreducible polynomial in $K[X]$ whose coefficient at the monomial $X^i$ equals $a_i$ for all $i\in\{0,\ldots,n\}$.
\end{definition}

Note that every Hilbertian field is an infinite pseudo-Hilbertian field. In particular,
algebraic function fields over an arbitrary field and algebraic number fields are pseudo-Hilbertian.
See \cite{Fried} for more on Hilbertian fields. Moreover, finite fields are pseudo-Hilbertian \cite{Pollack}.

For the following theorem, note that if $D$ is noetherian and $\Gamma$ is a numerical monoid, then $D[\Gamma]$ is noetherian and hence Mori.

\begin{theorem}\label{main:1}
Let $D$ be a weakly Krull UMT-domain with non-zero conductor $\mathfrak{f}_D = (D:\widehat{D}) \neq (0)$
and infinite pseudo-Hilbertian quotient field $K$.
Let $\Gamma\neq \N_0$ be a numerical monoid and suppose that $D[\Gamma]$ is a Mori domain. Then $\mathcal L(D[\Gamma])$ is full.
\end{theorem}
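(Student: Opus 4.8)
The plan is to reduce the full system of sets of lengths of $D[\Gamma]$ to that of an auxiliary $T$-block monoid $B = \mathcal{B}(G_0,T,\iota)$ of the kind described above, and then to exhibit enough structure in $G_0$ and $T$ that every finite subset of $\mathbb{N}_{\geq 2}$ is realized as a set of lengths. Since $D[\Gamma]$ is a Mori domain by hypothesis and, by Theorem~\ref{theorem7} together with the hypotheses on $D$ and $\Gamma$ (a numerical monoid is a weakly Krull UMT-monoid with $\mathbb{N}_0$ as integral closure, a valuation monoid, and $K[\mathbb{Z}]$ a UFD, cf.\ the proof of Corollary~\ref{coro10}), the ring $R := D[\Gamma]$ is a weakly Krull Mori domain. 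Its conductor is non-zero: because $\Gamma \neq \mathbb{N}_0$ is a numerical monoid, the conductor of $\Gamma$ in $\mathbb{N}_0$ is a non-zero ideal, and combined with $\mathfrak f_D \neq (0)$ one gets $\mathfrak f_{D[\Gamma]} \neq (0)$ (the complete integral closure of $D[\Gamma]$ is $\widehat D[\mathbb{N}_0]$). So the setup preceding the theorem applies: $\mathcal{L}(R) = \mathcal{L}(B)$, and $\mathcal{L}(\mathcal{B}(G_0)) \subseteq \mathcal{L}(R)$.

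\textbf{The core of the argument} is then to show that $\mathcal{B}(G_0)$ — or more precisely $\mathcal{B}(G_1)$ for a well-chosen subset $G_1 \subseteq G_0$ — already has a full system of sets of lengths, which by the containments above forces $\mathcal{L}(R)$ to contain $\{\{0\},\{1\}\} \cup \{L \subseteq \mathbb{N}_{\geq 2} \mid L \text{ finite non-empty}\}$; the reverse containment is automatic since $R$ is a BF-domain. The classical fact (Kainrath's theorem, \cite{Kainrath}) is that if $G_0$ contains an infinite group, or more modestly if every element of an infinite abelian group arises as a sum pattern freely enough, then $\mathcal{B}(G_0)$ has full system of sets of lengths. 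Here the relevant input is the work on distribution of prime divisors in class groups of affine semigroup rings \cite{Affine}: one must verify that the divisor-class group $G = \mathcal{C}_v(D[\Gamma])$ is infinite and that $G_0$, the set of classes containing height-one primes disjoint from the conductor, is large enough — ideally that $G_0$ is all of an infinite subgroup, or at least contains every element of such a subgroup with the prescribed multiplicities. This is exactly where the \emph{pseudo-Hilbertian} hypothesis on $K$ enters: it guarantees a rich supply of irreducible polynomials with prescribed low-order coefficients, which \cite{Affine} translates into prime ideals of $D[\Gamma]$ lying in prescribed classes; infinitude of $K$ upgrades this to infinitely many such primes per class. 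Thus the plan is: (i) identify $G = \mathcal{C}_v(D[X])$-type class group explicitly (it will be non-trivial precisely because $D[\Gamma] \subsetneq D[\mathbb{N}_0]$ and the singular prime $D[\Gamma \setminus \{0\}]$ contributes), (ii) invoke \cite{Affine} to conclude $G_0$ realizes every element of an infinite subgroup $G_1$ with arbitrary multiplicity, and (iii) apply Kainrath-type realization to $\mathcal{B}(G_1)$.

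\textbf{For the reduction step} I would be careful about one subtlety: the prime $\mathfrak{q}$ coming from $\Gamma \setminus \{0\}$ (which contains the conductor) sits in the $T$-component $D_i = (H_{\mathfrak{q}})_{red}$, and its local arithmetic — the structure of $D[\Gamma]_{D[\Gamma \setminus\{0\}]}$, a one-dimensional local Mori domain — feeds into $T$ via $\iota$. One does not need to control $T$ finely for the \emph{full} conclusion: $\mathcal{B}(G_1) \subseteq \mathcal{B}(G_0,T,\iota)$ is divisor-closed regardless of $T$, so the block-monoid lengths over $G_1$ (with trivial $T$-part) already suffice. Hence $T$ can be ignored for the lower bound, and the finiteness/BF property handles the upper bound.

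\textbf{The main obstacle} I anticipate is step (ii): proving that $G_0$ is genuinely large — that for the class group $G$ of this specific family $D[\Gamma]$ with $D$ a weakly Krull UMT-domain (not Krull, possibly with complicated $\mathcal{C}_v(D)$), the primes avoiding the conductor are equidistributed over an infinite subgroup. This requires a precise description of $\mathcal{C}_v(D[\Gamma])$ in terms of $\mathcal{C}_v(D)$, $\mathcal{C}_v(\Gamma)=\{0\}$ (numerical monoids are half-factorial in this sense? — actually one needs $\mathrm{Cl}_t$ of the numerical monoid, which is trivial as it is primary with valuation integral closure), and a "Picard-type" correction term, plus the prime-distribution result of \cite{Affine} applied with the pseudo-Hilbertian hypothesis doing the heavy lifting of producing irreducibles in prescribed residues. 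Everything else — the Mori property, non-zero conductor, the transfer homomorphism $\beta: H \to B$, and the BF upper bound — is either hypothesis or cited machinery.
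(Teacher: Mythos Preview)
Your plan is essentially the paper's proof. The paper makes your step~(ii) concrete by invoking the decomposition $\mathcal{C}_v(D[\Gamma]) \cong \mathcal{C}_v(D[X]) \oplus \Pic(K[\Gamma])$ from \cite[Theorem~5]{Chang_numerical}, then uses \cite[Theorem~1 and Propositions~3.4,~3.7]{Affine} (this is where the infinite pseudo-Hilbertian hypothesis enters) to show $\Pic(K[\Gamma])$ is infinite with every class containing prime divisors, so $G_1 = \Pic(K[\Gamma]) \subseteq G_0$ works and Kainrath's theorem finishes exactly as you outline.
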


\begin{proof}
By Corollary \ref{coro10} and assumption, $D[\Gamma]$ is a weakly Krull Mori domain.
Then $$\mathfrak f_{D[\Gamma]}=(D[\Gamma]:\widehat{D[\Gamma]})\neq (0)$$ by  \cite[Lemma 3.1]{Affine},
whence we are in the situation that we explained at the beginning of this section.
The class group of $D[\Gamma]$ is of the form $\mathcal C_v(D[\Gamma])\cong \mathcal C_v(D[X])\oplus \Pic(K[\Gamma])$ \cite[Theorem 5]{Chang_numerical}.
Since $K$ is infinite and pseudo-Hilbertian, $K[\Gamma]$ has infinitely many prime divisors in every class by \cite[Theorem 1]{Affine}
and  $\Pic(K[\Gamma])$ is infinite by \cite[Propositions 3.4 \& 3.7]{Affine}.
So if $G_0\subseteq \mathcal C_v(D[\Gamma])$ denotes the set of classes containing prime divisors,
then $G_0$ contains the infinite abelian group $\Pic(K[\Gamma])$.
Hence $\mathcal B(\Pic(K[\Gamma]))\subseteq \mathcal B(G_0)\subseteq \mathcal B(G_0,T,\iota)$.
Therefore $\mathcal L(\mathcal B(\Pic(K[\Gamma]))\subseteq \mathcal L(D[\Gamma])$
and the statement follows by Kainrath's Theorem \cite{Kainrath}.
\end{proof}

To give two important special cases of Theorem \ref{main:1}, we apply it to orders in algebraic number fields and to polynomial rings.

\begin{corollary}
Let $\Gamma\neq \N_0$ be a numerical monoid.
\begin{enumerate}
\item If $D$ is an order in an algebraic number field, then $\mathcal L(D[\Gamma])$ is full.
\item If $D$ is a noetherian weakly Krull UMT-domain with non-zero conductor, then $\mathcal L(D[X][\Gamma])$ is full.
\end{enumerate}
\end{corollary}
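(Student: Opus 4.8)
The plan is to deduce both parts directly from Theorem~\ref{main:1}. In each case the strategy is the same: exhibit that the hypotheses of Theorem~\ref{main:1} are met, and in particular that the relevant base ring is a weakly Krull UMT-domain with non-zero conductor and infinite pseudo-Hilbertian quotient field, and that the semigroup ring in question is Mori.

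\emph{Part (1).} Let $D$ be an order in an algebraic number field $K$. First I would recall the standard facts: $D$ is a one-dimensional noetherian domain, hence weakly Krull, with conductor $\mathfrak f_D=(D:\widehat D)\neq(0)$ since $\widehat D$ is the ring of integers of $K$, a finitely generated $D$-module. Moreover $D$ is a UMT-domain: its integral closure $\widehat D$ is Dedekind, hence Pr\"ufer, and since $D$ is one-dimensional this gives $\widehat D_P$ Pr\"ufer for every $P\in t\text{-}\mathrm{Max}(D)=X^1(D)$, which is the criterion \cite[Theorem 1.5]{fgh98}. The quotient field $K$ is an algebraic number field, hence Hilbertian, hence infinite and pseudo-Hilbertian. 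Finally, since $D$ is noetherian and $\Gamma$ is a numerical monoid, $D[\Gamma]$ is noetherian and therefore Mori (as noted in the paragraph preceding Theorem~\ref{main:1}). All hypotheses of Theorem~\ref{main:1} are satisfied, so $\mathcal L(D[\Gamma])$ is full.

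\emph{Part (2).} Let $D$ be a noetherian weakly Krull UMT-domain with non-zero conductor, and set $D':=D[X]$. The goal is to verify that $D'$ again satisfies the hypotheses of Theorem~\ref{main:1}, after which $\mathcal L(D'[\Gamma])=\mathcal L(D[X][\Gamma])$ is full. By the corollary following Corollary~\ref{coro3}, $D[X]$ is a weakly Krull domain since $D$ is a weakly Krull UMT-domain; it is noetherian because $D$ is, hence Mori, and $D[X][\Gamma]$ is likewise noetherian (thus Mori) since $\Gamma$ is numerical. That $D[X]$ is a UMT-domain follows from \cite[Theorem 17]{cs18} applied with $\Gamma=\N_0$ (equivalently, it is classical that $D[X]$ is a UMT-domain whenever $D$ is). The conductor $(D[X]:\widehat{D[X]})$ is non-zero: $\widehat{D[X]}=\widehat D[X]$, and if $0\neq c\in(D:\widehat D)$ then $c\in(D[X]:\widehat D[X])$. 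The quotient field of $D[X]$ is $K(X)$ where $K=\mathrm{qf}(D)$; it is infinite, and it is pseudo-Hilbertian because a rational function field over any field is Hilbertian. Theorem~\ref{main:1} then applies to $D[X]$ in place of $D$, giving the claim.

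The main point to be careful about is the verification of the pseudo-Hilbertian and UMT-domain properties of the base rings in Part~(2): one must cite that $K(X)$ is Hilbertian (a standard fact from Galois theory, e.g.\ \cite{Fried}, since Hilbertianity is inherited by finitely generated field extensions) and that the UMT-property and non-zero conductor ascend to polynomial rings. Beyond these citations the argument is purely a matter of unwinding definitions and invoking Theorem~\ref{main:1}, so there is no genuine obstacle; the corollary is a direct specialization of the main theorem to two concrete and important families of examples.
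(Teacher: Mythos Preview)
Your proposal is correct and follows essentially the same approach as the paper: in both parts you verify the hypotheses of Theorem~\ref{main:1} and apply it directly. The only differences are cosmetic choices of citation---for Part~(2) the paper cites \cite[Theorem 2.4]{fgh98} for the UMT property of $D[X]$, Corollary~\ref{coro10} for the weakly Krull property, and \cite[Lemma 3.1]{Affine} for the non-zero conductor, whereas you use equivalent alternatives (the unnumbered corollary after Corollary~\ref{coro3}, \cite[Theorem 17]{cs18} with $\Gamma=\N_0$, and a direct argument via $\widehat{D[X]}=\widehat{D}[X]$)---but the logical structure is identical.
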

\begin{proof}
(1) If $D$ is an order in an algebraic number field, then $D$ is a noetherian weakly Krull UMT-domain
(the integral closures of the localizations at maximal $t$-ideals are one-dimensional Krull by Mori-Nagata Theorem,
whence Pr\"ufer) with non-zero conductor and infinite pseudo-Hilbertian quotient field. Now the assertion follows from Theorem \ref{main:1}.

(2) If $D$ is a noetherian weakly Krull UMT-domain with non-zero conductor, then $D[X]$ is noetherian and a UMT-domain \cite[Theorem 2.4]{fgh98}. It follows from Corollary \ref{coro10} that $D[X]$ is weakly Krull and from \cite[Lemma 3.1]{Affine} that the conductor $\mathfrak f_{D[X]}$ is non-zero. Since the quotient field $K(X)$ of $D[X]$ is infinite and pseudo-Hilbertian, we can apply Theorem \ref{main:1}.
\end{proof}

Recall that a monoid $H$ with quotient group $\langle H\rangle$ is said to be \textit{seminormal}
if for all $x\in \langle H\rangle$ we have that $x^2, x^3\in H$ implies $x\in H$. For the next theorem,
 note that seminormal affine monoids are characterized in terms of their geometry,
  e.g. see \cite[Proposition 2.42]{Bruns}. Also, for seminormal weakly Krull affine monoids,
  either statement (1) or statements (2) and (3) of the next theorem are true always. The theorem is particularly interesting because even in the case of orders $\mathcal O$ in algebraic number fields, $\min(\Delta(\mathcal O))>1$ can occur.

\begin{theorem}\label{main:2}
Let $K$ be a field, $\Gamma$ be a weakly Krull affine monoid
that is not a numerical monoid, and assume that the root closure $\bar{\Gamma}$ is factorial. Then either $K[\Gamma]$ is half-factorial or $\min(\Delta(K[\Gamma]))=1$. Moreover, the following hold true.
\begin{enumerate}
\item If $\mathcal C_v(K[\Gamma])$ is infinite, then $\mathcal L(K[\Gamma])$ is full.
\item If $\mathcal C_v(K[\Gamma])$ is finite, then $K[\Gamma]$ satisfies the Structure Theorem for Sets of Lengths (see \cite[Definition 4.7.1]{GHK}).
\item If $\mathcal C_v(K[\Gamma])$ is finite and $\Gamma$ is seminormal, then
both $\Delta(K[\Gamma])$ and $\mathcal U_k(K[\Gamma])$ are finite intervals for all $k\geq 2$.
\end{enumerate}
\end{theorem}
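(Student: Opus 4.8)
The plan is to reduce everything to the arithmetic of the $T$-block monoid $\mathcal{B}(G_0,T,\iota)$ attached to $H=K[\Gamma]\setminus\{0\}$, exactly as set up at the beginning of Section~5. First I would verify the hypotheses needed to enter that setup: since $\Gamma$ is a weakly Krull affine monoid and $\bar{\Gamma}$ is factorial (hence in particular a P$v$MM, so $\Gamma$ is a UMT-monoid), Corollary~\ref{coro8} (char~$0$) or Corollary~\ref{coro9} (char~$p$) together with the fact that $\langle\Gamma\rangle\cong\Z^m$ is of type $(0,0,0,\dots)$ shows $K[\Gamma]$ is a weakly Krull domain; being affine it is noetherian, hence Mori, and by \cite[Lemma 3.1]{Affine} its conductor $\mathfrak{f}_{K[\Gamma]}$ is non-zero. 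So $K[\Gamma]$ is a weakly Krull Mori domain with non-zero conductor and the whole block-monoid machinery applies: $\mathcal{L}(K[\Gamma])=\mathcal{L}(B)$ with $B=\mathcal{B}(G_0,T,\iota)$, and the component monoids $D_i$ are the localizations at the finitely many height-one primes containing $\mathfrak{f}_H$.

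For the dichotomy ``half-factorial or $\min(\Delta(K[\Gamma]))=1$'', the key point is that $\bar{\Gamma}$ factorial forces each localization $\widehat{(K[\Gamma])}_{\mathfrak{q}_i}$ to be of a very controlled type — I expect the integral closures of these one-dimensional local rings to be PIDs (discrete valuation rings), coming from the fact that $\bar{\Gamma}_S$ is a \emph{discrete} valuation monoid for each maximal $t$-ideal $S$ (factoriality of $\bar{\Gamma}$ gives rank-one discreteness). This should put us in the framework of \cite{local} / \cite[Theorem 3.7.1]{GHK} on one-dimensional local Mori domains: each $D_i$ then has a set of distances that is either empty or contains $1$, and one invokes the structural result that for block monoids $\mathcal{B}(G_0,T,\iota)$ with such components, $\Delta(B)\neq\emptyset$ implies $1\in\Delta(B)$ (this is the weakly Krull analogue of the classical statement $\min\Delta(\mathcal{B}(G_0))=1$ once $|\Delta|\geq 1$). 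Half-factoriality is exactly the case $\Delta=\emptyset$.

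For the three numbered items I would argue as follows. For (1): if $\mathcal{C}_v(K[\Gamma])$ is infinite, then since $\Gamma$ is affine and not numerical, $\Pic(K[\langle\Gamma\rangle])$ or more precisely the relevant class group is infinite, and by \cite[Theorem 1]{Affine} (distribution of prime divisors in class groups of affine semigroup rings) \emph{every} class of the infinite group $G$ contains a prime divisor from $P$, so $G_0$ contains an infinite subgroup; then $\mathcal{B}(G_0)\subseteq B$ is divisor-closed, hence $\mathcal{L}(\mathcal{B}(G_0))\subseteq\mathcal{L}(K[\Gamma])$, and Kainrath's Theorem \cite{Kainrath} gives that $\mathcal{L}(\mathcal{B}(G_0))$ — hence $\mathcal{L}(K[\Gamma])$ — is full. (This is the same line as the proof of Theorem~\ref{main:1}.) For (2): if $\mathcal{C}_v(K[\Gamma])$ is finite, then $B=\mathcal{B}(G_0,T,\iota)$ has finite class group and finitely many finitely-primary components $D_i$, so $B$ is a ``weakly Krull monoid of finite type'' in the sense of \cite[Section 4.7]{GHK}, and the Structure Theorem for Sets of Lengths \cite[Theorem 4.7.? / Definition 4.7.1]{GHK} applies directly, transferring to $K[\Gamma]$ via $\mathcal{L}(K[\Gamma])=\mathcal{L}(B)$. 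For (3): if in addition $\Gamma$ is seminormal, then $K[\Gamma]$ is seminormal and the localizations $D_i$ become seminormal finitely primary monoids of rank one, which are particularly tame; the stronger conclusion that $\Delta(K[\Gamma])$ and all $\mathcal{U}_k(K[\Gamma])$ ($k\geq 2$) are finite \emph{intervals} follows from the corresponding result for seminormal weakly Krull monoids with finite class group, e.g.\ \cite[Theorem 5.8 / 6.x]{GKR} or the analysis in \cite{Affine}.

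The main obstacle I anticipate is the reduction in the dichotomy and in item~(3): one must pin down exactly what the local component monoids $D_i=(H_{\mathfrak{q}_i})_{red}$ look like under the hypothesis ``$\bar{\Gamma}$ factorial,'' i.e.\ show they are (seminormal, in case (3)) finitely primary monoids of rank one, so that the known structural theorems on weakly Krull monoids with such components apply verbatim. Establishing that $\bar{\Gamma}$ factorial propagates to ``$\widehat{K[\Gamma]_{\mathfrak{q}}}$ is a Dedekind / discrete valuation type ring at each relevant prime'' — rather than merely Prüfer — is the real content, and will use Lemma~\ref{lemma2} together with the fact that a factorial monoid localizes to discrete valuation monoids. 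Once that is in place, the rest is an application of Kainrath's Theorem and the Structure Theorem machinery of \cite{GHK}, exactly as in Theorem~\ref{main:1}.
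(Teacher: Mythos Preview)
Your overall architecture matches the paper's: verify $K[\Gamma]$ is a weakly Krull Mori domain with non-zero conductor, pass to the $T$-block monoid, and then invoke Kainrath for (1), the Structure Theorem for (2), and the seminormal machinery of \cite{GKR} for (3). Two points deserve correction.

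First, for the dichotomy ``half-factorial or $\min\Delta=1$'' your proposed route through the local components $D_i$ and \cite{local}/\cite[Theorem 3.7.1]{GHK} is speculative and not how the paper proceeds. The paper does not analyze the $D_i$ individually at this step; instead it establishes upfront (using Lemma~\ref{lemma1} and \cite[Proposition~2.7]{Rein}) that $\widehat{K[\Gamma]}=K[\bar\Gamma]$ and that $X^1(K[\bar\Gamma])\to X^1(K[\Gamma])$ is a bijection, and then simply cites \cite[Theorem~1.1]{GZ} as a black box for both the dichotomy and, in the seminormal case (3), the fact that $\Delta(K[\Gamma])$ is an interval. Your attempt to synthesize the dichotomy from properties of each $D_i$ would require a separate argument that $1\in\Delta(B)$ once $\Delta(B)\neq\emptyset$; this is not automatic from $1\in\Delta(D_i)$ and is exactly what \cite{GZ} supplies.

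Second, the key input from \cite{Affine} is \cite[Theorem~2]{Affine} (not Theorem~1, which is the numerical/pseudo-Hilbertian case used in Theorem~\ref{main:1}): for $\Gamma$ affine, non-numerical, with $\bar\Gamma$ factorial, \emph{every} class of $\mathcal C_v(K[\Gamma])$ contains infinitely many prime divisors, i.e.\ $G_0=G$. The paper states this once at the outset and uses it both for (1) (so that $\mathcal B(G)\subseteq B$ and Kainrath applies directly) and implicitly as a hypothesis needed for \cite[Theorem~1.1]{GZ}. Your discussion of (1) detours through $\Pic(K[\langle\Gamma\rangle])$ and ``$G_0$ contains an infinite subgroup,'' which is unnecessary: the hypothesis already gives $G$ infinite, and \cite[Theorem~2]{Affine} gives $G_0=G$.
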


\begin{proof}
Note that every weakly Krull affine monoid is a UMT-monoid,
because its localizations at maximal $t$-ideals are finitely generated primary monoids,
whence their integral closures are primary Krull monoids, that is, discrete rank one valuation monoids.
Thus, $K[\Gamma]$ is a weakly Krull domain by Corollary \ref{coro10}.
Moreover, $K[\Gamma]$ is noetherian, $K[\bar{\Gamma}]=K[\widehat{\Gamma}]=\widehat{K[\Gamma]}$ and there is a one-to-one correspondence of
height-one prime ideals $X^1(K[\bar{\Gamma}])\to X^1(K[\Gamma])$ given by $P\mapsto P\cap K[\Gamma]$ using a combination of Lemma \ref{lemma1} and \cite[Proposition 2.7]{Rein}.
Since every finitely generated monoid always has a non-empty conductor \cite[Theorem 2.7.13]{GHK},
it follows from \cite[Lemma 3.1]{Affine} that $K[\Gamma]$ has a non-zero conductor.
Thus, we are in the situation that we explained at the beginning of this section.
Moreover, by \cite[Theorem 2]{Affine}, there are infinitely many prime divisors in all classes of $\mathcal C_v(K[\Gamma])$.

The statement on the half-factoriality and the minimum of $\Delta(K[\Gamma])$ follows from \cite[Theorem 1.1]{GZ}.

(1)  Let $\mathcal C_v(K[\Gamma])$ be infinite. Then $\mathcal B(\mathcal C_v(K[\Gamma]))\subseteq \mathcal B(\mathcal C_v(K[\Gamma]), T, \iota)$. Therefore $\mathcal L(\mathcal B(\mathcal C_v(K[\Gamma])))\subseteq \mathcal L(K[\Gamma])$ and the statement follows by Kainrath's Theorem \cite{Kainrath}.

(2) This is immediate by \cite[Chapter 4.7]{GHK}.

(3) If $\Gamma$ is seminormal, then $K[\Gamma]$ is seminormal by \cite[Theorem 4.75]{Bruns}.
Thus $\mathcal U_k(K[\Gamma])$ (resp., $\Delta(K[\Gamma])$) is a finite interval for all $k\geq 2$ by \cite[Theorem 5.8.2 (a)]{GKR}
(resp., \cite[Theorem 1.1]{GZ}).
\end{proof}

\begin{remark}
{\em Let $R$ be a weakly Krull Mori domain. Then the monoid $H = \mathcal I^* (R )$ is a weakly Krull Mori monoid.
If $R$ has a nonzero conductor, then $H$ has a nonzero conductor; if $R$ is seminormal, then $H$ is seminormal;
if the $v$-class group $\mathcal C_v(R)$ of $R$ has (infinitely) many prime divisors in the classes,
then the same is true for $H$ (see \cite[Theorem 4.4 \& Corollary 4.7]{GeA}).
Thus, all the mentioned arithmetical properties for $R$ hold true for $H$ too.}
\end{remark}

We close this section with an application of Theorem \ref{main:2}(1). We first
need the following lemma.

\begin{lemma}\label{lemma:numerical}
Let $K$ be a field and $S_1, \ldots ,S_n$ be numerical monoids with $n>1$. Then
\[
\mathcal C_v(K[\bigoplus_{i=1}^n S_i])\cong \bigoplus_{i=1}^n \mathcal C_v(K(X_1,\ldots ,X_{n-1})[S_i]).
\]
\end{lemma}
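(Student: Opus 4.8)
The plan is to decompose the semigroup ring $K[\bigoplus_{i=1}^n S_i]$ one coordinate at a time, using the identification $K[\bigoplus_{i=1}^n S_i] = K[S_1][\bigoplus_{i=2}^n S_i] = \cdots$, and to track how the $v$-class group transforms under passing from a base ring to a numerical semigroup ring over it. The key computational input is the same formula already used in the proof of Theorem \ref{main:1}: for a numerical monoid $S$ and a domain $D$ with quotient field $F$ one has $\mathcal C_v(D[S]) \cong \mathcal C_v(D[X]) \oplus \Pic(F[S])$, by \cite[Theorem 5]{Chang_numerical} together with the fact that $\mathcal C_v$ coincides with $Cl_t$ on Mori domains. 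Here the term $\mathcal C_v(D[X])$ absorbs the "old" class group of $D$ (polynomial extension does not change it much, and in fact $\mathcal C_v(D[X]) = \mathcal C_v(D)$ when $D$ is, say, integrally closed, but we will not even need this), while $\Pic(F[S])$ is the genuinely new summand contributed by $S$ and depends only on the quotient field $F$ of $D$.

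First I would set up the iteration. Writing $D_0 = K$ and $D_j = K[S_1 \oplus \cdots \oplus S_j] = D_{j-1}[S_j]$, each $D_{j-1}$ is a noetherian (hence Mori) domain since numerical semigroup rings over noetherian rings are noetherian, so the class-group formula applies at every stage: $\mathcal C_v(D_j) \cong \mathcal C_v(D_{j-1}[X_j]) \oplus \Pic(F_{j-1}[S_j])$, where $F_{j-1}$ denotes the quotient field of $D_{j-1}$. The quotient field of $D_{j-1} = K[S_1 \oplus \cdots \oplus S_{j-1}]$ is the rational function field $K(X_1, \ldots, X_{j-1})$ in $j-1$ variables, since the quotient group of $S_1 \oplus \cdots \oplus S_{j-1}$ is $\mathbb Z^{j-1}$. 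Unwinding the recursion gives $\mathcal C_v(K[\bigoplus_{i=1}^n S_i]) \cong \bigoplus_{j=1}^n \Pic(K(X_1,\ldots,X_{j-1})[S_j]) \oplus (\text{a telescoping polynomial contribution starting from } \mathcal C_v(K))$, and since $\mathcal C_v(K) = 0$ and polynomial extension of the zero class group stays zero, that extra contribution vanishes.

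The remaining point is cosmetic but must be handled: the statement of the lemma has $\mathcal C_v(K(X_1,\ldots,X_{n-1})[S_i])$ for \emph{every} $i$, with the full set of $n-1$ variables, whereas the natural telescoping produces $\Pic(K(X_1,\ldots,X_{j-1})[S_j])$ with only $j-1$ variables in the $j$-th slot. So I would prove a base-field invariance statement: for a numerical monoid $S$ and a field $F$, $\Pic(F[S])$ depends only on whether $F$ is "large enough", and more precisely $\mathcal C_v(F[S]) \cong \mathcal C_v(F'[S])$ whenever $F \subseteq F'$ is a purely transcendental extension — this is exactly the kind of statement underlying \cite[Propositions 3.4 \& 3.7]{Affine} and the Picard-group computations there, and it lets me replace each $K(X_1,\ldots,X_{j-1})$ by the common larger field $K(X_1,\ldots,X_{n-1})$. (One must also note that reordering the summands $S_1,\ldots,S_n$ is harmless, which is clear since $\bigoplus_i S_i$ is symmetric in the factors.) Combining, $\mathcal C_v(K[\bigoplus_{i=1}^n S_i]) \cong \bigoplus_{i=1}^n \Pic(K(X_1,\ldots,X_{n-1})[S_i]) \cong \bigoplus_{i=1}^n \mathcal C_v(K(X_1,\ldots,X_{n-1})[S_i])$.

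The main obstacle is the base-field invariance of $\Pic(F[S])$ under transcendental extension; the telescoping itself is routine once the class-group formula is in hand. If that invariance is not literally available off the shelf in the cited form, the fallback is to carry the variables through honestly and observe that the asymmetric-looking right-hand side $\bigoplus_{j=1}^n \Pic(K(X_1,\ldots,X_{j-1})[S_j])$ is in fact isomorphic to the symmetric one by comparing Picard groups of numerical semigroup rings over the respective fields — which again reduces to the same invariance statement, so it is really the one thing that has to be checked.
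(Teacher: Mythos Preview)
Your recursion does not unwind the way you claim. You correctly write
\[
\mathcal C_v(D_j)\;\cong\;\mathcal C_v(D_{j-1}[X_j])\oplus \Pic\bigl(K(X_1,\ldots,X_{j-1})[S_j]\bigr),
\]
but this expresses $\mathcal C_v(D_j)$ in terms of $\mathcal C_v(D_{j-1}[X_j])$, \emph{not} $\mathcal C_v(D_{j-1})$. Since $D_{j-1}=K[S_1\oplus\cdots\oplus S_{j-1}]$ is not integrally closed when some $S_i\neq\N_0$, you cannot drop the extra polynomial variable, and there is no recursion in $j$ to ``unwind.'' If you carry the variables through honestly, then at the step where $S_j$ is peeled off the base ring is $K[S_1\oplus\cdots\oplus S_{j-1}\oplus\N_0^{\,n-j}]$, whose quotient field has transcendence degree $n-1$ over $K$ (not $j-1$). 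So each Picard summand already comes with the full field $K(X_1,\ldots,X_{n-1})$, and no invariance statement is needed.

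This matters because the base-field invariance you propose as a patch is false. For $S=\langle 2,3\rangle$ one has $\Pic(F[S])\cong (F,+)$, and $(\Q,+)\not\cong(\Q(X),+)$ as abelian groups (they are $\Q$-vector spaces of different dimensions). So your fallback route cannot work. The paper's proof is precisely the corrected version of your idea: it proves by induction on $m$ that
\[
\mathcal C_v\Bigl(K\bigl[\textstyle\bigoplus_{i=1}^n S_i\bigr]\Bigr)\;\cong\;\mathcal C_v\bigl(K[\Z^m\oplus S_{m+1}\oplus\cdots\oplus S_n]\bigr)\oplus\bigoplus_{i=1}^m \mathcal C_v\bigl(K(X_1,\ldots,X_{n-1})[S_i]\bigr),
\]
replacing each $S_m$ by $\Z$ (rather than by $\N_0$) so that the base ring always has quotient group $\Z^{n-1}$ and hence quotient field $K(X_1,\ldots,X_{n-1})$; at $m=n$ the leftover term is $\mathcal C_v(K[\Z^n])=0$.
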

\begin{proof}
Note that $\mathcal C_v(K[\Z^m])$ is trivial for all integers $m \geq 1$, so
it suffices to show that for all positive integers $m$ with $m\leq n$,
$$\mathcal C_v(K[\bigoplus_{i=1}^n S_i])\cong \mathcal C_v(K[\Z^m\oplus S_{m+1}\oplus\ldots\oplus S_n])\oplus\bigoplus_{i=1}^m \mathcal C_v(K(X_1,\ldots ,X_{n-1})[S_i]).$$
We prove the isomorphism by induction on $m \leq n$.
First if $m=1$, then
\begin{eqnarray*}
\mathcal C_v(K[\bigoplus_{i=1}^n S_i]) &\cong& \mathcal C_v(K[\bigoplus_{i=2}^n S_i][S_1])\\
 &\cong& \mathcal C_v(K[\bigoplus_{i=2}^n S_i][\Z])\oplus \mathcal C_v(K(X_1,\ldots , X_{n-1})[S_1])\\
 &\cong& \mathcal C_v(K[\Z\oplus\bigoplus_{i=2}^n S_i])\oplus \mathcal C_v(K(X_1,\ldots , X_{n-1})[S_1]),
 \end{eqnarray*}
 where the second isomorphism follows from \cite[Theorem 5]{Chang_numerical}.
 Now assume that $m>1$. Then by the induction hypothesis,
\begin{align*}
\mathcal C_v(K[\bigoplus_{i=1}^n S_i])&\cong \mathcal C_v(K[\Z^{m-1}\oplus S_{m}\oplus\ldots\oplus S_n])\oplus\bigoplus_{i=1}^{m-1} \mathcal C_v(K(X_1,\ldots ,X_{n-1})[S_i])\\
&\cong \mathcal C_v(K[\Z^{m-1}\oplus \bigoplus_{i=m+1}^n S_i][S_m])\oplus \bigoplus_{i=1}^{m-1} \mathcal C_v(K(X_1,\ldots ,X_{n-1})[S_i])\\
&\cong \mathcal C_v(K[\Z^{m-1}\oplus \bigoplus_{i=m+1}^n S_i][\Z])\oplus\bigoplus_{i=1}^m \mathcal C_v(K(X_1,\ldots ,X_{n-1})[S_i])\\
&\cong \mathcal C_v(K[\Z^m\oplus S_{m+1}\oplus\ldots\oplus S_n])\oplus\bigoplus_{i=1}^m \mathcal C_v(K(X_1,\ldots ,X_{n-1})[S_i]).
\end{align*}
Thus, the isomorphism holds for all positive integers $m$ with $m \leq n$.
\end{proof}

\begin{corollary}
Let $K$ be a field, $S_1, \ldots ,S_n$ be numerical monoids with $n>1$
such that $S_i\neq \N_0$ for at least one of the $S_i$, and $\Gamma=\bigoplus_{i=1}^n S_i$.
Then $\mathcal L(K[\Gamma])$ is full.
\end{corollary}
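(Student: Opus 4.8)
The plan is to reduce to Theorem~\ref{main:2}(1) by verifying its hypotheses for $\Gamma = \bigoplus_{i=1}^n S_i$. First I would observe that $\Gamma$ is an affine monoid (a finitely generated submonoid of $\Z^n$) which is weakly Krull: each $S_i$ is a numerical monoid, hence weakly Krull, and a finite direct sum of weakly Krull monoids is weakly Krull. Moreover $\Gamma$ is not a numerical monoid since $n > 1$ forces the quotient group to be $\Z^n$ with $n \geq 2$. Next I would check that the root closure $\bar{\Gamma}$ is factorial: since $\bar{S_i} = \N_0$ for every $i$ (the root closure of a numerical monoid in $\Z$ is $\N_0$), we get $\bar{\Gamma} = \bigoplus_{i=1}^n \N_0 = \N_0^n$, which is free abelian, hence factorial. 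So the standing hypotheses of Theorem~\ref{main:2} are met.

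The remaining point is to show that $\mathcal C_v(K[\Gamma])$ is infinite, so that alternative (1) of Theorem~\ref{main:2} applies and yields that $\mathcal L(K[\Gamma])$ is full. Here I would invoke Lemma~\ref{lemma:numerical}, which gives
\[
\mathcal C_v(K[\Gamma]) \cong \bigoplus_{i=1}^n \mathcal C_v(K(X_1,\ldots,X_{n-1})[S_i]).
\]
By hypothesis there is an index $j$ with $S_j \neq \N_0$. For that $j$, the ring $F[S_j]$ over the field $F = K(X_1,\ldots,X_{n-1})$ is a numerical semigroup ring of a non-trivial numerical monoid, and such a ring is weakly Krull but not a Krull domain (equivalently, not integrally closed, since $S_j \subsetneq \N_0$); its $v$-class group $\mathcal C_v(F[S_j]) = \Pic(F[S_j])$ is infinite. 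This last fact is exactly what is established in the course of the proof of Theorem~\ref{main:1}: for an infinite field $F$ and a non-trivial numerical monoid $S_j$, $\Pic(F[S_j])$ is infinite by \cite[Propositions 3.4 \& 3.7]{Affine}, and $F = K(X_1,\ldots,X_{n-1})$ is infinite because it contains the transcendental element $X_1$ (or simply because $K$ is infinite, or because a purely transcendental extension of any field is infinite). Hence the $j$-th summand already makes $\mathcal C_v(K[\Gamma])$ infinite.

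With $\mathcal C_v(K[\Gamma])$ infinite and the hypotheses of Theorem~\ref{main:2} verified, alternative (1) of that theorem gives directly that $\mathcal L(K[\Gamma])$ is full, which is the assertion. The only mild subtlety — and the step I would be most careful about — is the citation chain establishing that $\Pic(F[S_j])$ is infinite for a \emph{non-trivial} numerical monoid over an infinite field; one must make sure the cited propositions from \cite{Affine} indeed apply to $K(X_1,\ldots,X_{n-1})[S_j]$ with no further hypothesis on the base field beyond infiniteness, which is the case since infinite fields impose no obstruction there. Everything else (affineness, weak Krull property, factoriality of the root closure, and the structural isomorphism for the class group) is routine or already recorded in Lemma~\ref{lemma:numerical}.
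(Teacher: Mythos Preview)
Your proposal is correct and follows essentially the same route as the paper's proof: verify the hypotheses of Theorem~\ref{main:2}, then use Lemma~\ref{lemma:numerical} together with \cite[Propositions~3.4 and~3.7]{Affine} to conclude that $\mathcal C_v(K[\Gamma])$ is infinite, and finish via Theorem~\ref{main:2}(1). You are in fact more explicit than the paper in checking that $\bar{\Gamma}=\N_0^n$ is factorial and that $\Gamma$ is not numerical; the only slight deviation is that the paper cites \cite[Proposition~5.8]{FZ} for the weakly Krull property of $\Gamma$, whereas you invoke closure of the weakly Krull property under finite direct sums, which is correct but perhaps deserves a reference or a one-line justification.
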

\begin{proof}
Clearly, $\Gamma$ is an affine monoid and by \cite[Proposition 5.8]{FZ} it is weakly Krull, so we only need to show that $K[\Gamma]$ has an infinite class group.
Then we can apply Theorem \ref{main:2}(1). But this follows immediately from Lemma \ref{lemma:numerical}
in combination with \cite[Propositions 3.4 and 3.7]{Affine}.
\end{proof}

\vspace{.2cm}
\noindent
\textbf{Acknowledgements.}
The first named author was supported  by the Incheon National University Research
Grant in 2021. The second and third authors' work was supported by the Austrian Science Fund FWF, Projects W1230 and P~30934.

\end{document}